\theoremstyle{definition}
\newcounter{maincoro}
\newtheorem{theorem}{Theorem}[section]
\newtheorem{lemma}[theorem]{Lemma}
\newtheorem{fact}[theorem]{Fact}
\newtheorem{proposition}[theorem]{Proposition}
\newtheorem{corollary}[theorem]{Corollary}
\theoremstyle{definition}
\newcounter{maintheorem}
\theoremstyle{remark}
\numberwithin{equation}{section}
\newcommand{\R}{\mathbb{R}}
\newcommand{\N}{\mathbb{N}}
\renewcommand{\tocsection}[3]{%
	\indentlabel{\@ifnotempty{#2}{\bfseries\ignorespaces#1 #2\quad}}\bfseries#3}
\renewcommand{\tocsubsection}[3]{%
	\indentlabel{\@ifnotempty{#2}{\ignorespaces#1 #2\quad}}#3}
\newcommand\@dotsep{4.5}
\def\@tocline#1#2#3#4#5#6#7{\relax
	\ifnum #1>\c@tocdepth % then omit
	\else
	\par \addpenalty\@secpenalty\addvspace{#2}%
	\begingroup \hyphenpenalty\@M
	\@ifempty{#4}{%
		\@tempdima\csname r@tocindent\number#1\endcsname\relax
	}{%
		\@tempdima#4\relax
	}%
	\parindent\z@ \leftskip#3\relax \advance\leftskip\@tempdima\relax
	\rightskip\@pnumwidth plus1em \parfillskip-\@pnumwidth
	#5\leavevmode\hskip-\@tempdima{#6}\nobreak
	\leaders\hbox{$\m@th\mkern \@dotsep mu\hbox{.}\mkern \@dotsep mu$}\hfill
	\nobreak
	\hbox to\@pnumwidth{\@tocpagenum{\ifnum#1=1\bfseries\fi#7}}\par% <-- \bfseries for \section page
	\nobreak
	\endgroup
	\fi}
\renewcommand\csname r@tocindent0\endcsname{0pt}
\def\l@subsection{\@tocline{2}{0pt}{2.5pc}{5pc}{}}
\DeclareMathOperator{\co}{co}
\newcommand{\nn}[1]{{\left\vert\kern-0.25ex\left\vert\kern-0.25ex\left\vert #1 
		\right\vert\kern-0.25ex\right\vert\kern-0.25ex\right\vert}}
\newcommand{\ep}{\varepsilon}
\newcommand\restr[2]{\ensuremath{\left.#1\right|_{#2}}}
\thanks{}
\subjclass[2020]{}
\date{\today}
\keywords{}
\begin{document}

\title[Lipschitz approximability]{On the constant of Lipschitz approximability}

\author[R. Medina]{Rubén Medina}
\address[R. Medina]{Universidad de Granada, Facultad de Ciencias. Departamento de Análisis Matemático, 18071-Granada (Spain); and Czech Technical University in Prague, Faculty of Electrical Engineering. Department of Mathematics, Technická 2, 166 27 Praha 6 (Czech Republic) \newline
\href{https://orcid.org/0000-0002-4925-0057}{ORCID: \texttt{0000-0002-4925-0057}}}
\email{rubenmedina@ugr.es}

%\author[G. López-Pérez]{Ginés López-Pérez}
%\address[G. López-Pérez]{Universidad de Granada, Facultad de Ciencias. Departamento de Análisis Matemático, 18071-Granada (Spain) \newline
%\href{https://orcid.org/0000-0002-3689-1365}{ORCID: \texttt{0000-0002-3689-1365}}}
%\email{glopezp@ugr.es}

\thanks{}

\date{\today}
\keywords{Lipschitz, retractions, approximation properties}
\subjclass[2020]{46B20, 46B80, 51F30, 54C15}

\begin{abstract} 
In this note we find $\lambda>1$ and give an explicit construction of a separable Banach space $X$ such that there is no $\lambda$-Lipschitz retraction from $X$ onto any compact convex subset of $X$ whose closed linear span is $X$. This is closely related to a well-known open problem raised by Godefroy and Ozawa in 2014 and represents the first known example of a Banach space with such a property.\end{abstract}
\maketitle
\tableofcontents

\section{Introduction}

\subsection{Motivation and background}
This paper is motivated by a natural question asked by Godefroy and Ozawa in \cite{GO14}, and then subsequently in \cite{God15}, \cite{God215}, \cite{GMZ16}, \cite{GP19} and \cite{God20}.  They wonder whether for every separable Banach space there is a Lipschitz retraction onto a compact convex subset whose closed linear span is the whole space. Recall that this question only makes sense for separable spaces since any space with such a property must be generated by a compact set. In \cite[Theorem 3.3]{HM21}, the latter question is solved in the positive for Banach spaces with a finite dimensional decomposition. In fact, for every $\ep>0$ the retraction can be constructed to be $(1+\ep)$-Lipschitz whenever the space has a monotone Schauder basis. However, the general problem is still open although a Hölder version of the problem was solved positively in \cite[Theorem 2.1]{Med23}.

We prove that there exists a separable Banach space $X$ such that there is no $\lambda$-Lipschitz retraction onto any compact convex subset of $X$ whose closed linear span is the whole space for some $\lambda>1$. However, our example does have such a retraction with larger Lipschitz constant (see Corollary \ref{corl1}).

The Godefroy-Ozawa question is closely related to the theory of nonlinear approximation properties. Indeed, if there is a $\lambda$-Lipschitz retraction $R$ from  a Banach space $X$ onto a generating compact convex subset $K$ of $X$ then there is a sequence of $\lambda$-Lipschitz retractions $R_n:X\to X$ with compact range and pointwise converging to the identity on $X$. Specifically, if we fix $k_0\in K$, the mapping $R_n$ may be defined as $R_n(x)=nR\big(\frac{x}{n}+k_0\big)-nk_0$ which is a translation of $R$ (so that $0$ is in the image) composed with an expansion. Notice that $R_n(x)\to x$ for all $x\in X$ since $\overline{\bigcup_nR_n(X)}=\overline{\bigcup_nn(K-k_0)}=X$. Therefore, Godefroy-Ozawa question is strongly related to Kalton's Problem 1 in \cite[page 1260]{Kal12}, namely, whether in every separable Banach space $X$ there is a sequence of equi-uniformly continuous mappings with compact range pointwise converging to the indentity on $X$. It is worth mentioning that a counterexample to Kalton's problem would provide a renorming of $\ell_1$ without the metric approximation property, solving a classical open problem in the theory of approximation properties.

Nonlinear approximation properties are intimately connected to the linear ones (for instance, see \cite[Theorem 5.3]{GK03}, \cite[Corollary 4.8]{HM21} and \cite[Corollary 2.9]{HM22}). For this exact reason, in our desire of solving Godefroy-Ozawa question, we have decided to follow the path traced by Enflo to construct a separable Banach space without a basis (\cite{Enf73}). He first tackled the problem of finding a Banach space with basis constant greater than 1 (see \cite{Enf73b}) and so have we done in our analogous nonlinear setting. Moreover, Enflo himself mentions in \cite[Second paragraph of page 309]{Enf73b} that the approach developed in \cite{Enf73} is similar to that of \cite{Enf73b}, which we find highly encouraging.

Section \ref{mainsec} is divided into two subsections. In order to prevent the reader from losing the intuition among the computations developed in our proof, we have added a short subsection explaining the general ideas behind the construction. In the last subsection, both the construction and the proof of our main result Theorem \ref{mainTH} are carried out.

\subsection{Definitions and notation.}

Let $(M,d_M)$ and $(N,d_N)$ be two metric spaces and let $f:M\to N$ be an arbitrary mapping. We will say that $f$ is \textit{Lipschitz} if there is $\lambda\ge0$ such that $d_N(f(x),f(y))\le\lambda d_M(x,y)$ for every $x,y\in M$. We may specify that some constant $\lambda\ge0$ satisfy the latter inequality saying that $f$ is $\lambda$-Lipschitz. The infimum over all $\lambda>0$ such that $f$ is $\lambda$-Lipschitz is called the \textit{Lipschitz constant} of $f$.

A \textit{retraction} from a metric space $(M,d)$ onto a subset $N\subset M$ is a map $R:M\to N$ satisfying that $R(x)=x$ for every $x\in N$. The image of a retraction is called a \textit{retract}. We say that $R$ is a \textit{Lipschitz retraction} or equivalently that $R(M)$ is a \textit{Lipschitz retract} whenever the retraction $R$ is Lipschitz.

If $X$ is a Banach space, we say that a subset $S$ of $X$ is a \textit{generating} subset whenever the closed linear span of $S$ is $X$. We denote the closed linear span of $S$ as $\overline{\text{span}}(S)$.

Given some countable set $\Gamma$, we denote $\ell_\infty(\Gamma)$ the space of bounded functions $x:\Gamma\to\R$ endowed with the supremum norm $\|x\|_\infty=\sup\limits_{\gamma\in\Gamma}|x(\gamma)|$. Whenever we are working with an element $x$ belonging to some $\ell_\infty(\Gamma)$ we will denote the supremum norm of $x$ simply by $\|x\|$. We also denote $\ell_1(\Gamma)$ the space of summable functions $x:\Gamma\to\R$ endowed with the norm $\|x\|_1=\sum\limits_{\gamma\in\Gamma}|x(\gamma)|$. For $N\in\N$ we will refer as $\ell_\infty^N$ and $\ell_1^N$ to the space $\ell_\infty(\Gamma)$ and $\ell_1(\Gamma)$ respectively for $\Gamma=\{1,\dots,N\}$. We say that a sequence $(e_n)_{n\in\N}$ of normalised vectors from a Banach space $E$ is \textit{equivalent to the} $\ell_1(\N)$ \textit{basis} if for every sequence $(\lambda_n)_{n\in\N}\subset \R$ with finitely many nonzero elements it holds that
$$\bigg\|\sum\limits_{n\in\N}\lambda_ne_n\bigg\|=\sum_{n\in\N}|\lambda_n|.$$
It is worth mentioning that the given definition is not the usual concept of basic sequences being equivalent.

Given a Banach space $E$, two nonempty subsets $\mathcal{S}_1$ and $\mathcal{S}_2$ of $E$ and an element $v\in E$, we say that $v$ is a \textit{midpoint} of $\mathcal{S}_1$ and $\mathcal{S}_2$ whenever for every $z\in\mathcal{S}_1\cup\mathcal{S}_2$, $\|v-z\|=d(\mathcal{S}_1,\mathcal{S}_2)/2.$

Throughout the entire note, we will consider $\N$ the natural numbers starting from $1$ and $\N_0=\N\cup\{0\}$.

Throughout the entire note we will follow the terminology and notation used in \cite{Fab1}. For the background on Lipschitz and uniformly continuous retractions we refer the reader to the first two chapters of the authoritative monograph \cite{BL2000}.

\section{Main result}\label{mainsec}
\subsection{Sketch of the proof}
Let us begin with a somehow imprecise but useful formulation of the approach used in subsection \ref{sub2}. We will try to construct $X$ such that if $R$ is a $(1+\ep)$-Lipschitz retraction with convex image $K$ (where $0\in K$) for some small enough $\ep>0$ almost preserving some fixed vectors $\pm U_1,\pm U_2,\pm V_{1,1}, \pm V_{2,1}\in X$ then there must be a sequence in $K$ without any Cauchy subsequence. The reasoning is going to follow an iterative argument. More precisely, out of the eight vectors $\pm U_1,\pm U_2,\pm V_{1,1}, \pm V_{2,1}$ we will be able to find another four vectors $\pm V_{1,2},\pm V_{2,2}$ distant from the rest which are also almost preserved by $R$. Then, using the fact that $\pm U_1,\pm U_2,\pm V_{1,2}, \pm V_{2,2}$ are almost preserved by $K$ we will find another four vectors $\pm V_{1,3},\pm V_{2,3}$ distant from the rest which are again almost preserved by $R$ and so on. The sought sequence without Cauchy subsequences is going to be $(R(V_{1,n}))_{n\in\N}$. The unique problem being how to produce the new four almost preserved vectors $\pm V_{1,n+1},\pm V_{2,n+1}$ out of the previous almost preserved vectors $\pm U_1,\pm U_2,(\pm V_{1,m})_{m=1}^n, (\pm V_{2,m})_{m=1}^n$:

For technical reasons, the argument depends on whether $n$ is even or odd. Assume first that $n$ is odd and $\pm U_1,\pm U_2, \pm V_{1,n},\pm V_{2,n}$ are almost preserved by $R$. We need to consider two subsets of vectors $\mathcal{S}_n^+,\mathcal{S}^-_n\subset X$ such that \\$\mathcal{S}_n^{\pm}\subset\co\{R(\pm U_1), R(\pm U_2), R(\pm V_{1,n}), R(\pm V_{2,n})\}\subset K$. More precisely, the sets $\mathcal{S}_n^\pm$ contain 8 vectors $\mathcal{S}_n^\pm=\{Z^\pm_{1,n},\dots,Z^\pm_{8,n}\}$ chosen such that the elements $Z^+_{1,n},\dots,Z^+_{8,n}$ are close to $\frac{U_1+\frac{V_{1,n}+V_{2,n}}{2}}{2}$ and the elements $Z^-_{1,n},\dots,Z^-_{8,n}$ are close to $-\frac{U_1+\frac{V_{1,n}+V_{2,n}}{2}}{2}$. The space $X$ is constructed so that $ V_{1,n+1}$ is the unique midpoint of $\{0\}$ and $\mathcal{S}_n^+\setminus\{Z^+_{2,n}\}$ and the vector $V_{2,n+1}$ is the unique midpoint of $\{0\}$ and $\mathcal{S}_n^-\setminus\{Z^-_{3,n}\}$. Since all the latter sets are in $K$ their elements are preserved by $R$ and since there are no other midpoints of those sets and \{0\} rather than $V_{1,n+1},V_{2,n+1}$, these vectors must also be almost preserved by the $(1+\ep)$-Lipschitz retraction $R$.

We may define $U_1,U_2,V_{1,n},V_{2,n}$ satisfying $\|U_1\|,\|U_2\|=1$, $\|V_{1,n}\|,\|V_{2,n}\|\approx 1/3$ and 

$$d(\{0\},\mathcal{S}^\pm_{n})\approx\frac{\|U_1\|+\frac{\|V_{1,n}\|+\|V_{2,n}\|}{2}}{2}\approx 2/3\;\;\;\text{ for }i=2,3.$$
Hence, the vectors $V_{1,n+1},V_{2,n+1}$ must satisfy that $\|V_{1,n+1}\|,\|V_{2,n+1}\|=d(\{0\},\mathcal{S}^\pm_{n})/2\approx 1/3\approx \|V_{1,n}\|,\|V_{2,n}\|$. This is the key ingredient which prevents the sequence $(V_{1,n})_{n\in\N}$ from converging to $0$. Therefore, it is possible to define the vectors distant from each other. If, on the contrary, $n$ is even, we proceed analogously but using $U_2$ instead of $U_1$.

Let us now introduce the purpose that our main lemmas
below serve. Lemma \ref{prelim2} shows that $\pm V_{i,n+1}$ is the midpoint of $\{0\}$ and its respective set $\mathcal{S}$ described above whereas Lemma \ref{prelim3} proves that $\pm V_{i,n+1}$ is the unique element with such a property. Moreover, since the argument must work up to $\ep>0$, we show in Lemma \ref{prelim3} that if a vector $B$ is almost the midpoint (up to $\ep$) of $\{0\}$ and $\mathcal{S}$ then $B$ is almost $\pm V_{i,n+1}$ (up to a constant $C>0$ times $\ep$). Finally, Lemmas \ref{prelim2} and \ref{prelim3} are used in the proof of Theorem \ref{precth} (a more precise formulation of Theorem \ref{mainTH}) to finish the section.

\subsection{Final construction}\label{sub2}
We will give an explicit definition of a separable Banach space $X$ and a proof of the fact that there is no $\lambda$-Lipschitz retraction onto any generating compact and convex subset of $X$ for some $\lambda>1$. That is, we will be proving the following result.

\begin{theorem}\label{mainTH}
    There is $\lambda>1$ and a separable Banach space $X$ such that no retraction onto any generating compact and convex subset of $X$ is $\lambda$-Lipschitz.
\end{theorem}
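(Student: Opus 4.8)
The plan is to follow the iterative blueprint sketched in the introduction, constructing $X$ concretely as a completion of a normed space built on a countable index set carrying the vectors $\pm U_1,\pm U_2$ and all the $\pm V_{i,n}$, with the norm engineered so that the midpoint-uniqueness properties hold. Concretely, I would take a set $\Gamma$ and build $X$ as a subspace of (or a quotient/renorming related to) $\ell_\infty(\Gamma)$ or $\ell_1(\Gamma)$ in which: (i) $\|U_1\|=\|U_2\|=1$ and $\|V_{i,n}\|\approx 1/3$; (ii) for each odd $n$, the vector $V_{1,n+1}$ is the \emph{unique} midpoint of $\{0\}$ and $\mathcal S_n^+\setminus\{Z^+_{2,n}\}$, and similarly for $V_{2,n+1}$ with $\mathcal S_n^-\setminus\{Z^-_{3,n}\}$ (and the analogous statement with $U_2$ in place of $U_1$ for even $n$); and (iii) the $V_{i,n}$ are uniformly separated, so $(V_{1,n})_n$ has no Cauchy subsequence. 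The $\ell_1$-type structure is what forces the distance $d(\{0\},\mathcal S_n^\pm)$ to stay near $2/3$ and the new midpoints to have norm near $1/3$, preventing collapse to $0$.

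With such an $X$ in hand, I would argue by contradiction: suppose $R:X\to K$ is a $\lambda$-Lipschitz retraction onto a generating compact convex set $K$, with $\lambda=1+\ep$ for the (small, explicitly computed) $\ep>0$ produced by the construction. Since $K$ generates $X$ and $0$ may be assumed to lie in $K$ (translate), the vectors $\pm U_1,\pm U_2,\pm V_{1,1},\pm V_{2,1}$ are \emph{almost} preserved by $R$ — here one uses that points near $K$ are moved little by a retraction with Lipschitz constant close to $1$, quantified via the distance of these vectors to $K$, which the construction keeps small. Then I run the induction: assuming $\pm U_1,\pm U_2,\pm V_{1,n},\pm V_{2,n}$ are preserved up to $C^n\ep$ (the constant deteriorates geometrically), I locate the sets $\mathcal S_n^\pm$ inside $\co\{R(\pm U_1),R(\pm U_2),R(\pm V_{1,n}),R(\pm V_{2,n})\}\subset K$ whose elements are therefore fixed by $R$; Lemma~\ref{prelim2} identifies $\pm V_{i,n+1}$ as the midpoint of $\{0\}$ and the appropriate eight-element set minus one point, and Lemma~\ref{prelim3} shows this midpoint is unique and moreover \emph{quantitatively} stable — anything that is an $\eta$-approximate midpoint lies within $C\eta$ of $\pm V_{i,n+1}$. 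Applying this to $B=R(V_{i,n+1})$ (which is an approximate midpoint of $\{0\}$ and a set of points fixed by $R$, because $R$ is $(1+\ep)$-Lipschitz and $0,V_{i,n+1}$ are close to $K$) gives that $\pm V_{1,n+1},\pm V_{2,n+1}$ are preserved up to $C^{n+1}\ep$, closing the induction. The parity alternation between $U_1$ and $U_2$ is a technical device to keep the relevant configurations from degenerating; I would treat the even case by the same argument with the roles swapped.

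The payoff is that $(R(V_{1,n}))_{n\in\N}\subset K$ stays within $C^n\ep$ of the uniformly separated sequence $(V_{1,n})_n$; choosing $\ep$ small enough relative to the separation constant and the growth rate $C$ — this is where the explicit constants matter and where $\lambda>1$ is pinned down — forces $(R(V_{1,n}))_n$ to have no Cauchy subsequence, contradicting compactness of $K$. Packaging this as Theorem~\ref{precth} (the precise version tracking $\lambda$) and then deducing Theorem~\ref{mainTH} is the final step.

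I expect the main obstacle to be the \textbf{design of the norm on $X$ together with the proof of midpoint uniqueness (Lemma~\ref{prelim3}) with good constants}: one must arrange the geometry so that the eight points $Z^\pm_{j,n}$ — which are only \emph{approximately} equal to $\pm\tfrac12\big(U_1+\tfrac12(V_{1,n}+V_{2,n})\big)$, since they come from images under an approximate isometry, not exact copies — still admit $\pm V_{i,n+1}$ as the only (approximate) midpoint with $\{0\}$, and that the removal of exactly one point $Z^+_{2,n}$ (resp. $Z^-_{3,n}$) is what singles out $V_{1,n+1}$ (resp. $V_{2,n+1}$) rather than some other vector. Controlling the error constant $C$ so that $C^n\ep$ does not blow up before compactness is contradicted — equivalently, ensuring the separation of the $V_{i,n}$ is robust enough to absorb the geometric loss — is the quantitative heart of the argument, and is presumably the reason the theorem only yields \emph{some} $\lambda>1$ rather than all $\lambda$ close to $1$.
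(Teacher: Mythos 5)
You have correctly identified the overall architecture (an $\ell_\infty$-based construction, a midpoint-equality lemma, a quantitative uniqueness lemma, an induction on $n$, and a dilation trick to make a retraction onto a generating set almost fix the seed vectors), and this does match the paper's strategy. However, the error accounting in your induction is wrong in a way that would make the argument collapse.

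You propose to carry forward an error of the form $C^n\ep$, with $C>1$ coming from Lemma~\ref{prelim3}, and then to ``choose $\ep$ small enough'' so that $C^n\ep$ stays below the separation constant of the sequence $(V_{1,n})$. This cannot work: for any fixed $\ep>0$ and $C>1$, $C^n\ep\to\infty$, so for large $n$ the bound $C^n\ep$ exceeds the separation and the contradiction with compactness evaporates. There is no choice of $\ep$ that rescues a geometrically growing error. The whole point of the paper's Theorem~\ref{precth} is that the error does \emph{not} accumulate; the bound $\ep_0$ is reproduced verbatim at each step, not multiplied by $C$.

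The mechanism is as follows, and is the genuinely non-obvious part that your sketch misses. Lemma~\ref{prelim2} is an \emph{exact} identity (the midpoint equalities $\|V_{i,n+1}\|=\big\|(-1)^{i+1}V_{i,n+1}-\tfrac{2-2\delta}{3\|Z_{k,n}\|}Z_{k,n}\big\|=\tfrac{1-\delta}{3}$) whose hypothesis is only a \emph{fixed-threshold} closeness ($\|W_i-V_{i,n}\|,\|E_i-U_i\|\le\ep_0$): as long as the previous vectors are within $\ep_0$ of their targets, the conclusion is an equality with no $\ep_0$-dependent slack. Consequently the approximate-midpoint defect of $B=R(V_{i,n+1})$ comes \emph{only} from the Lipschitz constant: $\|B\|$ and $\|B-\tfrac{2-2\delta}{3\|Z_{k,n}\|}Z_{k,n}\|$ are at most $(1+\ep_0/C)\cdot\tfrac{1-\delta}{3}\le\tfrac{1-\delta}{3}+\ep_0/C$. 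Feeding a defect of size $\ep_0/C$ into Lemma~\ref{prelim3} returns an error of size $C\cdot(\ep_0/C)=\ep_0$ — exactly the inductive hypothesis, so the induction is self-sustaining with a uniform bound. The choice $\lambda=1+\ep_0/C$ is precisely what balances the $\times C$ loss in Lemma~\ref{prelim3} against the Lipschitz slack, and this balance is what you need to make explicit; without it, the inductive bound degrades and the proof fails. A smaller secondary point: the initial hypothesis that $R$ almost fixes $\pm U_i,\pm V_{i,1}$ is not a feature of the construction of $X$ (those vectors have no reason to be near $K$) but is obtained by replacing $R$ with a dilated retraction $R_k(x)=kR(x/k+x_0)-kx_0$ and letting $k$ be large, using that $K$ is generating.
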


The space $X$ will arise as the closed linear span of countably many carefully chosen vectors from $\ell_\infty(\N^2)$.

Let us then proceed with the definition of $X$. The construction will depend on some constants $\delta,M,\Delta>0$ which will be specified later. We set for every $1\le k\le8$ the following quantities,
$$\begin{aligned}x_k=\frac{\frac{1}{2}+\frac{k}{M}}{1-\frac{2k}{M}}\;\;&,\;\;t(k)=\frac{1}{2}-\frac{k}{M}\\
\alpha=1-x_8\;\;&,\;\;\beta=1+x_1^2,\end{aligned}$$
We now start defining functions $f,g:\R\to\R$ given by 
$$f(x)=\alpha+x\;\;\text{ and }\;\;g(x)=\beta-x^2\;\;\;\;\;(x\in\R).$$
We also define three vectors $u,v,c\in \R^8$ given by $u(k)=f(x_k)$, $v(k)=g(x_k)$ and $c(k)=1$ for $k=1,\dots,8$.
\begin{fact}\label{factou1} The vector $(x_k)_{k=1}^8$ is strictly increasing in $k=1,\dots,8$. Moreover, if $M>16$ and we set $\delta_M:=x_8^2-x_1^2$ then $\delta_M>x_8-x_1>0$, $\delta_M\xrightarrow{M\to\infty}0$ and
    $$\begin{aligned}\max\limits_{k=1,\dots,8}u(k)=u(8)=1\;\;\;&, \;\;\;\min\limits_{k=1,\dots,8}u(k)=u(1)=1-(x_8-x_1),\\ \max\limits_{k=1,\dots,8}v(k)=v(1)=1\;\;\;&,\;\;\;
    \min\limits_{k=1,\dots,8}v(k)=v(8)=1-(x_8^2-x_1^2).\end{aligned}$$
\end{fact}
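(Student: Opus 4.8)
The plan is to reduce everything to one elementary observation: for $M>16$ the quantity $t(k)=\tfrac12-\tfrac kM$ lies in the open interval $(0,\tfrac12)$ for $k=1,\dots,8$ and is strictly decreasing in $k$, and moreover
\[
x_k=\frac{\tfrac12+\tfrac kM}{1-\tfrac{2k}{M}}=\frac{1-t(k)}{2\,t(k)}=\frac{1}{2\,t(k)}-\frac12 .
\]
First I would prove that $(x_k)_{k=1}^8$ is strictly increasing: since $t\mapsto\tfrac1{2t}-\tfrac12$ is strictly decreasing on $(0,\infty)$ and $k\mapsto t(k)$ is strictly decreasing while staying positive (this is the only place the hypothesis $M>16$ enters), the composition $k\mapsto x_k$ is strictly increasing. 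Equivalently, one may simply expand $x_{k+1}-x_k$ over the common denominator $2(M-2k-2)(M-2k)$, observe that the numerator collapses to $4M>0$, and note that the denominator is positive because $M>2k+2$ for $k\le7$.

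Next I would record two immediate consequences of the same identity. From $t(k)<\tfrac12$ we get $\tfrac1{2t(k)}>1$, hence $x_k>\tfrac12$ for every $k=1,\dots,8$; and for each fixed $k$ we have $t(k)\to\tfrac12$ as $M\to\infty$, hence $x_k\to\tfrac12$. Combining $x_1,x_8>\tfrac12$ gives $x_8+x_1>1$, and the monotonicity already established gives $x_8-x_1>0$, so factoring $\delta_M=x_8^2-x_1^2=(x_8-x_1)(x_8+x_1)$ yields $\delta_M>x_8-x_1>0$; and since $x_8^2\to\tfrac14$ and $x_1^2\to\tfrac14$ we obtain $\delta_M\to0$.

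Finally, the extrema of $u$ and $v$ follow purely from monotonicity in $k$ together with the definitions $\alpha=1-x_8$ and $\beta=1+x_1^2$. Indeed $u(k)=\alpha+x_k=1-x_8+x_k$ is an increasing affine function of $x_k$, hence strictly increasing in $k$, so $\max_k u(k)=u(8)=1-x_8+x_8=1$ and $\min_k u(k)=u(1)=1-x_8+x_1=1-(x_8-x_1)$. Since $x_k>0$ is strictly increasing, $x_k^2$ is strictly increasing, so $v(k)=\beta-x_k^2=1+x_1^2-x_k^2$ is strictly decreasing in $k$, giving $\max_k v(k)=v(1)=1+x_1^2-x_1^2=1$ and $\min_k v(k)=v(8)=1+x_1^2-x_8^2=1-(x_8^2-x_1^2)$.

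I do not expect a real obstacle: this is a bookkeeping lemma, and the only point requiring care is to invoke $M>16$ exactly where positivity of the denominators $1-\tfrac{2k}{M}$ (equivalently of $t(k)$) for $k\le8$ is needed, after which everything rests on the monotonicity of $t\mapsto\tfrac1{2t}-\tfrac12$ and of $x\mapsto x^2$ on $(0,\infty)$ and on the elementary limit $t(k)\to\tfrac12$.
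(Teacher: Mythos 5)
Your proof is correct and follows essentially the same route as the paper: the paper establishes $x_k>\tfrac12$ from $M>16$, deduces $x_8+x_1>1$ and hence $\delta_M=(x_8+x_1)(x_8-x_1)>x_8-x_1$, and declares the rest straightforward; you simply spell out the "straightforward" parts (monotonicity of $x_k$, the limit $x_k\to\tfrac12$, and the extrema of $u,v$) via the clean identity $x_k=\tfrac{1}{2t(k)}-\tfrac12$.
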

\begin{proof}
    $M>16$ implies $x_k>\frac{1}{2}$ for every $1\le k\le8$. Then, $x_8+x_1>1$ and hence
    $$\delta_M=(x_8+x_1)(x_8-x_1)>x_8-x_1.$$
    The rest of the proof is straightforward.
\end{proof}

From now on, we will consider a sequence $(e_n)\subset S_{\ell_\infty(\N)}$ equivalent to the $\ell_1(\N)$ basis in $\ell_\infty(\N)$. This is possible since $\ell_\infty(\N)$ is isometrically universal for separable spaces. Also, let us consider  $\delta_i\in\R^8$ for $i=1,\dots,8$ given by $\delta_i(k)=0$ if $i\neq k$ and $\delta_i(i)=\Delta$.

We are finally ready to present the vectors from $\ell_\infty(\N^2)$ that will span our space. All the vectors will be defined following the same approach, namely, each vector will be defined in $\{m\}\times\N$ in a specific way for each $m\in\N$. In the case $m=1$, the definition will be splitted into blocks of 8 elements in a row, that is, the first block will be formed by $(1,1),\dots,(1,8)$, the second will be formed by $(1,8+1),\dots,(1,2\cdot 8)$ and the $n^\text{th}$ block will be formed by $(1,(n-1)8+1),\dots,(1,n8)$ (see Figure \ref{PICT1}). We denote as \textit{BLOCK m} the elements $(1,m8+1),\dots,(1,(m+1)8)$ for every $m\in\N_0$.

\begin{figure}
    \centering
    \includegraphics[scale=0.8]{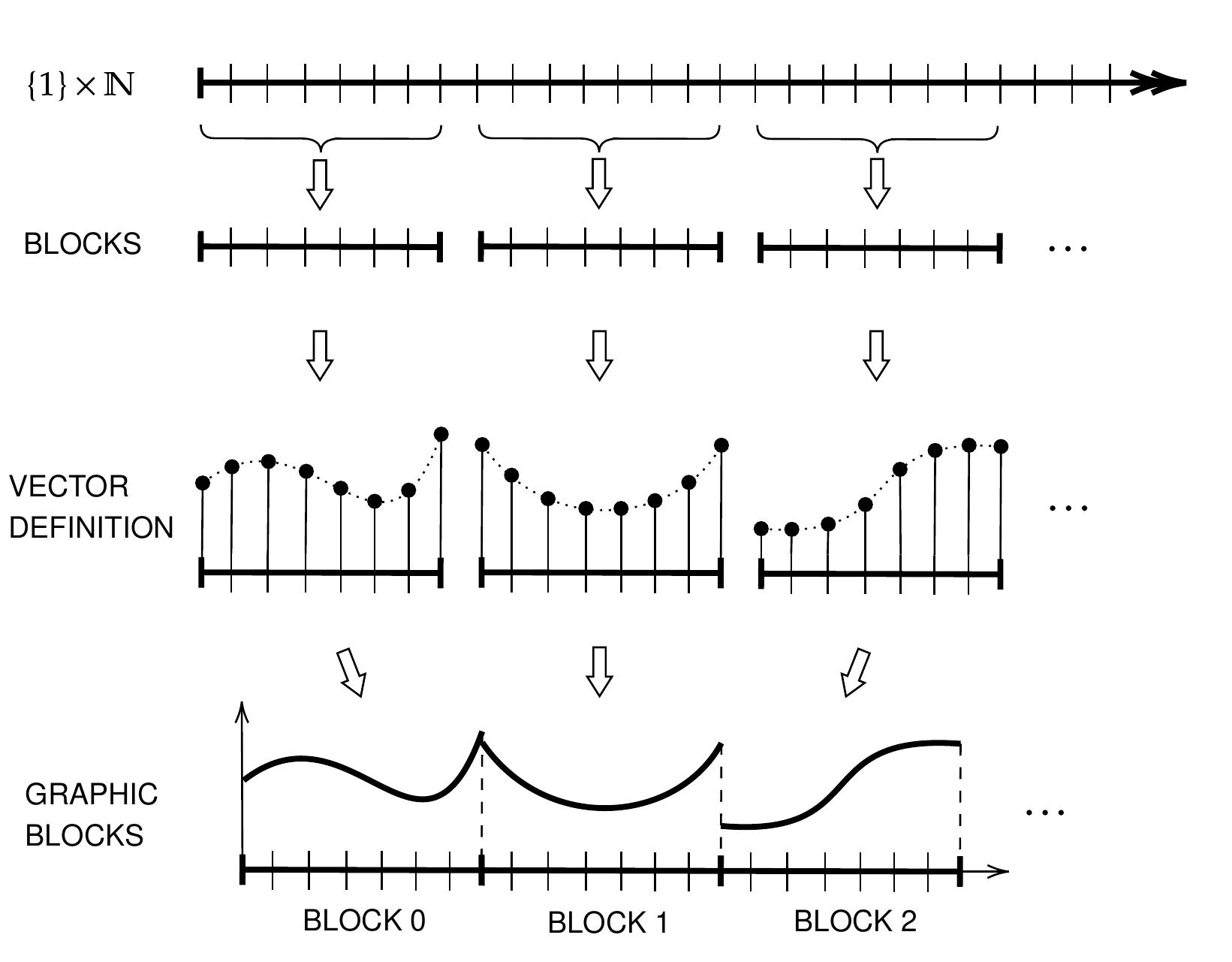}
    \caption{}
    \label{PICT1}
\end{figure}

Consider now the elements $U_1,U_2\in\ell_\infty(\N^2)$ given by
$$U_i(x)=\begin{cases}u(k)\;\;\;&\text{ if }x=(1,8(2m+j-1)+k)\text{ for }m\in\N_0,\;k=1,\dots,8,\\
\delta_8(k)&\text{ if }x=(1,8(2m-j)+k)\text{ for }m\in\N,\;k=1,\dots,8,\\\frac{1}{10}e_1(k)&\text{ if }x=(p,k)\text{ for }p,k\in\N,\;p\ge2.\end{cases}$$

%$$\begin{aligned}U_i(1,(2n-i)8+k)=u(k)\;\;&,\;\;U_i(1,(2n-i-1)8+k)=0,\\
%U_i(2,\cdot)=e_1\frac{1}{10}\;\;&,\;\;U_i(m+2,\cdot)=0.\end{aligned}$$
%For $n,m\in\N$, $k=1,\dots,8$ and $i=1,2$. 
%We leave the definition of $U_1,U_2$ in $(m,k)$ for $m>1$ for later.
%THE DEFINITION IS NOT COMPLETELY RIGUROUS.

%We define $d_1,\dots d_8\in \R^8$ as $d_i(k)=1-\frac{\delta_{i,k}}{10}$ for $k=1,\dots,8$ and $i=1,\dots,8$. 
Then, we define for every $n\in \N$ the elements $V_{1,n},V_{2,n}\in\ell_\infty(\N^2)$ as
$$V_{i,n}(x)=\begin{cases}
(-1)^{i-1}(\frac{1-\delta}{3}-\delta_{i+1}(k))&\text{ if }x=(1,n8+k)\text{, }k=1,\dots,8\\
v(k)-\frac{2+2\delta}{3}+(-1)^i\delta_{7}(k)&\text{ if }x=(1,(n+1)8+k)\text{, }k=1,\dots,8\\
(-1)^{i-1}(\frac{1-2\delta}{3}-\delta_{6}(k))&\text{ if }x=(1,m8+k)\text{, }k=1,\dots,8,\;m\in2\N_0+\frac{1-(-1)^n}{2}\setminus\{n\},\\
(-1)^{i-1}\frac{1-\delta}{3}e_1(k)&\text{ if }x=(2n+i-1,k)\text{, }k\in\N,\\
\frac{1}{10}e_{2n+i-1}(k)&\text{ if }x=(m,k)\text{, }\left|\begin{aligned}&m,k\in\N,\;m\ge2,\\\;&m\neq 2n+i-1,2(n+1),2(n+1)+1,\end{aligned}\right.\\
0&\text{ elsewhere.}\end{cases}$$

Our sought space is
$$X=\overline{\text{span}}\Big(\{V_{i,n}\}_{\substack{i=1,2\\n\in\N}}\cup\{U_1,U_2\}\Big).$$
See Figure \ref{UI} and Figure \ref{VI} to have a conceptual idea of what the vectors $U_i,V_{i,n}$ look like when restricted to $\{1\}\times\N$ (we present the picture by giving the graph of the blocks arranged like in the last step of Figure \ref{PICT1}). Notice that, when restricted to $\{m\}\times\N$ for $m\neq1$, the previous vectors are nothing but multiples of some vector $e_j$ from the $\ell_1$ basis. We have included in Figure \ref{VI} a description of $\restr{\frac{V_{1,n}+V_{2,n}}{2}}{\{1\}\times\N}$ because it is also going to be central in further discussions.

\begin{figure}
    \centering
    \includegraphics[scale=0.5]{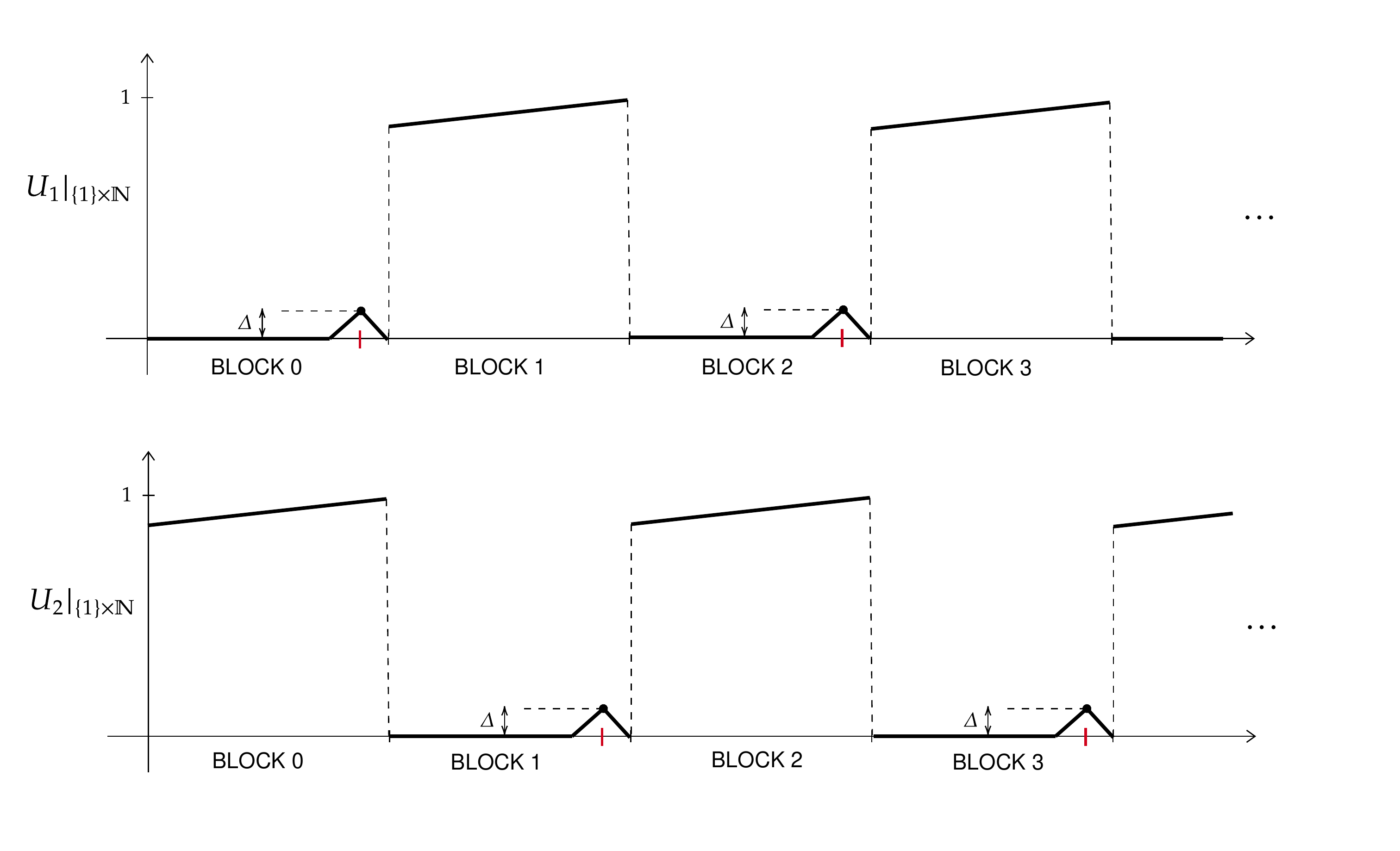}
    \caption{}
    \label{UI}
\end{figure}

\begin{figure}
    \centering
    \includegraphics[scale=0.5]{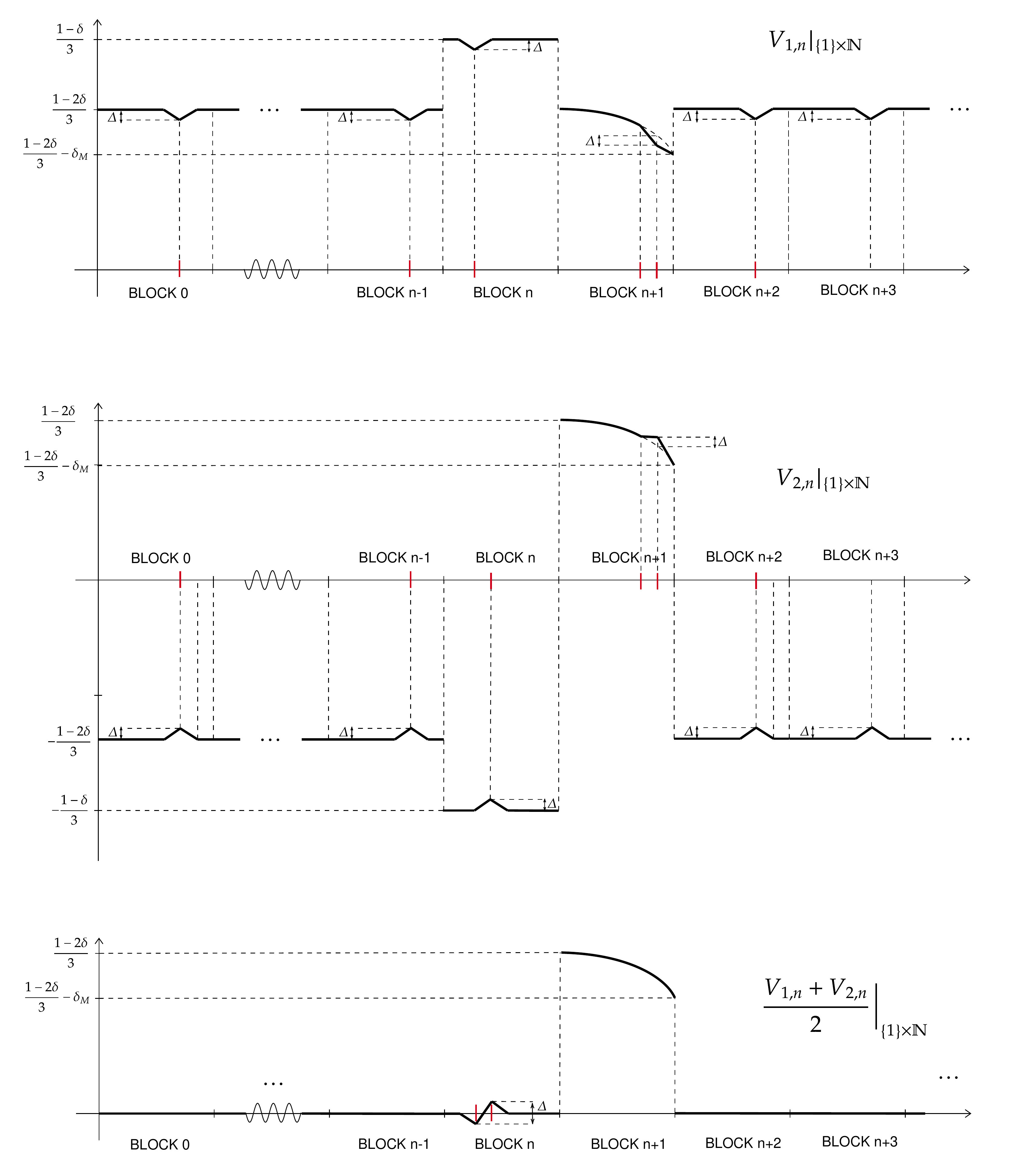}
    \caption{}
    \label{VI}
\end{figure}

Now, we turn our attention to the proof of Theorem \ref{mainTH}. For that purpose, we first need to state and prove some technical auxiliary results. We start with the following simple but handy lemma.
%$$\begin{aligned}V_{1,n}(1,(n-2)8+k)=\frac{1-t_0}{2-t_0}d_1(k)\;\;,\;\;V_{1,n}(1,(n-1)8+k)=v(k)-\frac{1}{2-t_0}c(k),\\
%V_{1,n}(1,2m+(n-2)8+k)=\frac{1-t_0}{2-t_0}c(k)\;\;,\;\;V_{1,n}(1,2m+1+(n-2)8+k)=0,\\
%V_{2,n}(1,(n-2)8+k)=-\frac{1-t_0}{2-t_0}d_8(k)\;\;,\;\;V_{2,n}(1,(n-1)8+k)=v(k)-\frac{1}{2-t_0}c(k),\\
%V_{2,n}(1,2m+(n-2)8+k)=-\frac{1-t_0}{2-t_0}c(k)\;\;,\;\;V_{2,n}(1,2m+1+(n-2)8+k)=0
%\end{aligned}$$
%$m\in\mathbb{Z}\setminus\{0\}$ and %$k=1,\dots,8$.
%Also, for $i=1,2$, $n\ge2$,
%$$V_{i,n}(2,n)=\frac{1-t_0}{2-t_0}\;\;,\;\; V_{i,n}(2n+i,\cdot)=\frac{1-t_0}{2-t_0}e_{2n+i}\;\;,\;\; V_{i,n}(m,\cdot)=(-1)^ie_{2n+1},$$
%for $m\neq 1,2,2n+i$.

\begin{lemma}\label{prelim1}
Let $N,n\in\N$ with $N>n$, $\{v_1,\dots,v_n\}\in \ell_\infty^N\setminus\{0\}$ and $c=(1,\dots,1)\in \ell_\infty^N$. If $v_1,\dots,v_n,c$ are linearly independent then there exists $K>0$ such that for every $a\in \text{span}\{v_1,\dots,v_n,c\}$ with $a=\lambda c+\sum\limits_{i=1}^n\lambda_iv_i$  it follows that
$$|\lambda-1|,|\lambda_i|\le K\|a-c\|\;,\;\;\;\;i=1,\dots,n.$$
\end{lemma}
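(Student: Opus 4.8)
The statement is a finite-dimensional, purely linear-algebraic fact: on the finite-dimensional space $W=\operatorname{span}\{v_1,\dots,v_n,c\}$, the linear functionals that read off the coordinates $\lambda,\lambda_1,\dots,\lambda_n$ of an element (with respect to the fixed basis $\{c,v_1,\dots,v_n\}$, which is a basis precisely because of the linear independence hypothesis) are bounded with respect to the $\ell_\infty^N$-norm restricted to $W$, since all norms on a finite-dimensional space are equivalent. The plan is to make this quantitative in the form requested.

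First I would observe that linear independence of $v_1,\dots,v_n,c$ means that every $a\in W$ has a \emph{unique} representation $a=\lambda c+\sum_{i=1}^n\lambda_i v_i$, so the coordinate maps $a\mapsto\lambda$ and $a\mapsto\lambda_i$ are well-defined linear functionals $\phi_0,\phi_1,\dots,\phi_n$ on $W$. Next, since $\dim W=n+1<\infty$, each $\phi_j$ is continuous on $(W,\|\cdot\|_\infty)$; let $C$ be the maximum of their operator norms, so that $|\phi_j(a)|\le C\|a\|$ for all $a\in W$ and all $j=0,\dots,n$. Now apply this to the vector $a-c$, which again lies in $W$: writing $a-c=(\lambda-1)c+\sum_{i=1}^n\lambda_i v_i$ we read off $\phi_0(a-c)=\lambda-1$ and $\phi_i(a-c)=\lambda_i$, hence $|\lambda-1|\le C\|a-c\|$ and $|\lambda_i|\le C\|a-c\|$ for each $i$. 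Taking $K=C$ finishes the argument.

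If one prefers to avoid invoking the abstract equivalence of norms, the same conclusion follows by a direct compactness argument: on the unit sphere $S=\{w\in W:\|w\|_\infty=1\}$, which is compact since $W$ is finite-dimensional, the continuous function $w\mapsto\max\{|\phi_0(w)|,\max_i|\phi_i(w)|\}$ attains a finite maximum $C$, and then homogeneity gives $|\phi_j(w)|\le C\|w\|$ for every $w\in W$; alternatively one can note that the coefficient functionals are restrictions to $W$ of the Hahn–Banach extensions to $\ell_\infty^N$ and bound their norms that way. There is essentially no obstacle here — the only thing to be careful about is that the constant $K$ depends on the particular vectors $v_1,\dots,v_n$ (and hence, in later applications, on $M,\Delta,\delta$), not merely on $N$ and $n$; the statement already allows this, since $K$ is only asserted to exist. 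I would therefore keep the write-up to a few lines, emphasizing uniqueness of coordinates, finite-dimensionality, and the substitution $a\rightsquigarrow a-c$.
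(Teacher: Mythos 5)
Your proof is correct and follows essentially the same route as the paper's: both introduce the coordinate functionals dual to the basis $\{c,v_1,\dots,v_n\}$ of the span, bound them (the paper by extending them to elements of $(\ell_\infty^N)^*=\ell_1^N$, you by finite-dimensionality directly on $W$), and then evaluate at $a-c$ using that $\phi_0(c)=1$ and $\phi_i(c)=0$ for $i\ge1$. The substitution $a\rightsquigarrow a-c$ is exactly the paper's step $|\lambda_i|=|f_i(a)|=|f_i(a-c)|$, so there is no material difference.
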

\begin{proof}
    It follows immediately from the fact that the sequence $\{v_1,\dots,v_n,c\}$ is a block basis but, for the convienence of the reader, we include here a complete proof.
    
    Since $v_1,\dots,v_n,c$ are linearly independent, we know that there exist $f_0,f_1,\dots,f_n\in(\ell_\infty^N)^*=\ell_1^N$ such  that $z=f_0(z)c+\sum\limits_{i=1}^nf_i(z)v_i$ for every $z\in\ell_\infty^N$. Moreover, we know that $f_0(c)=1$ and $f_i(c)=0$ for $i=1,\dots,n$. Let us consider $K=\max\{\|f_0\|,\|f_1\|,\dots,\|f_n\|\}<\infty$. It is clear that for every $1\le i\le n$,
    $$|\lambda_i|=|f_i(a)|=|f_i(a-c)|\le\|f_i\|\cdot\|a-c\|\le K\|a-c\|.$$
    Finally, it is immediate that
    $$|\lambda-1|=\|f_0(a-c)\|\le\|f_0\|\|a-c\|\le K\|a-c\|.$$
\end{proof}

%A more general version of Lemma \ref{prelim1} holds true. In fact, the only thing we use is that $\{v_1,\dots,v_n,c\}$ forms a basic sequence in $\ell_\infty^N$. We have preferred to state it under the precise conditions that will be presented to us later.\\

We will make use of the following quantity which depends exclusively on $M>0$,
\begin{equation}\label{eqmu}\mu_M=\min\limits_{\substack{j,k=1,\dots,8\\j\neq k}}\{(1-t(k))(u(k)-u(j))+t(k)(v(k)-v(j))\}.\end{equation}
\begin{fact}\label{factou2}
For every $M>16$ it holds that $\mu_M>0$ and $\mu_M\xrightarrow{M\to\infty}0$.
\end{fact}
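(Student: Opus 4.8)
The plan is to collapse the two–parameter expression inside the minimum in \eqref{eqmu} into a manifestly nonnegative form. First I would record the elementary identities $u(k)-u(j)=x_k-x_j$ and $v(k)-v(j)=-(x_k-x_j)(x_k+x_j)$, which are immediate from $u(k)=1-x_8+x_k$ and $v(k)=1+x_1^2-x_k^2$. Substituting them, the $(j,k)$–term of \eqref{eqmu} factors as
$$(x_k-x_j)\big[(1-t(k))-t(k)(x_k+x_j)\big].$$

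The heart of the matter is the observation that $1-2k/M=2t(k)$ and $\tfrac12+\tfrac{k}{M}=1-t(k)$, so that $x_k=\dfrac{1/2+k/M}{1-2k/M}=\dfrac{1-t(k)}{2t(k)}$, i.e. $1-t(k)=2t(k)x_k$. Plugging this into the bracket above makes it telescope to $t(k)(x_k-x_j)$, and hence the $(j,k)$–term equals $t(k)(x_k-x_j)^2$. Therefore
$$\mu_M=\min_{\substack{j,k=1,\dots,8\\ j\neq k}}t(k)(x_k-x_j)^2.$$
Positivity is now transparent: for $M>16$ we have $t(k)=\tfrac12-\tfrac{k}{M}\geq\tfrac12-\tfrac{8}{M}>0$ for every $k\le 8$, while $x_k\neq x_j$ whenever $j\neq k$ by the strict monotonicity of $(x_k)_{k=1}^8$ from Fact \ref{factou1}; so the minimum of these finitely many strictly positive numbers is strictly positive. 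For the limit, as $M\to\infty$ each $x_k\to\tfrac12$, hence $x_k-x_j\to0$, while $0<t(k)<\tfrac12$; thus $0<\mu_M\le t(1)(x_2-x_1)^2\to 0$, so $\mu_M\to0$.

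I do not anticipate a genuine obstacle here; the one mildly delicate point is spotting the identity $x_k=\frac{1-t(k)}{2t(k)}$ that causes the bracket to simplify, since without it one is left staring at a family of expressions whose sign is not obvious at a glance. (As an alternative route to the limit, one can bound $(x_k-x_j)^2\le(x_8-x_1)^2\le\delta_M^2$ and invoke $\delta_M\to0$ from Fact \ref{factou1}, but the direct computation is cleaner and also yields the positivity.)
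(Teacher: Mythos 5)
Your proof is correct and rests on the same key identity as the paper's, namely $1-t(k)=2t(k)x_k$, but it packages that identity more explicitly. The paper introduces the quadratics $h_k=(1-t(k))f+t(k)g$ and observes that $h_k$ attains its unique maximum at $x_k$ (which is precisely the statement $h_k'(x_k)=0$, i.e. $x_k=\frac{1-t(k)}{2t(k)}$), so that $h_k(x_k)-h_k(x_j)>0$; for the limit it invokes continuity of $h_k$ together with $x_k,x_j\to\tfrac12$. Your computation instead factors the $(j,k)$-term directly to the closed form $t(k)(x_k-x_j)^2$, which is just the completed-square expansion $h_k(x_k)-h_k(x_j)=t(k)(x_j-x_k)^2$ made visible. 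This buys a slightly cleaner treatment of the $M\to\infty$ limit: the paper's appeal to continuity of $h_k$ is mildly delicate because $h_k$ itself depends on $M$ through $t(k),\alpha,\beta$, whereas your explicit bound $0<\mu_M\le t(1)(x_2-x_1)^2\le\tfrac12(x_8-x_1)^2\le\tfrac12\delta_M^2\to 0$ sidesteps that entirely. Both arguments are correct; yours is arguably the more transparent write-up of the same idea.
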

\begin{proof}
    Let us consider for $1\le k\le8$ the function $h_k:\R\to\R$ given by
    $$h_k(x)=(1-t(k))f(x)+t(k)g(x)\;\;\;\;\;(x\in \R).$$
    It is immediate that
    $$\mu_M=\min\limits_{\substack{j,k=1,\dots,8\\j\neq k}}\{h_k(x_k)-h_k(x_j)\}.$$
    For every choice of $M>16$, the function $h_k$ is a quadratic polynomial attaining a unique maximum at $x_k$. Therefore, if $j\neq k$ then $h_k(x_k)>h_k(x_j)$ and thus $\mu_M>0$. Now, since $h_k$ is contiuous and $x_k,x_j\xrightarrow{M\to\infty}1/2$, we clearly have
    $$h_k(x_k)-h_k(x_j)\xrightarrow{M\to\infty}h_k\Big(\frac{1}{2}\Big)-h_k\Big(\frac{1}{2}\Big)=0$$
    and we are done.    
\end{proof}

 Let us finally pick appropriate $\delta,M,\Delta>0$ for which $X$ will satisfy the statement of Theorem \ref{mainTH} for some $\lambda>1$.\\

\textbf{Choice of }$\delta,M$ \textbf{and} $\Delta$:\\
We take some $\delta>0$ satisfying
\begin{equation}\label{con0}
    \delta<\frac{1}{10},
\end{equation} 
and find $M$ large enough so that the following inequalities hold,\\
\begin{minipage}{0.4\textwidth}
    \begin{equation}\label{con1}
        \frac{8}{M}+\delta_M+\mu_M<\frac{1-8\delta}{6},
    \end{equation}
\end{minipage}
\begin{minipage}{0.2\textwidth}
\,
\end{minipage}
\begin{minipage}{0.4\textwidth}
    \begin{equation}\label{con2}
        \delta_M<\frac{\delta}{3}.
    \end{equation}
\end{minipage}
%$$\frac{8}{M}+\delta_M+\mu_M<\frac{1-8\delta}{6}\;\;\;\;,\;\;\;\;\delta_M<\frac{\delta}{3}.$$

Finally, we pick $\Delta>0$ so that
\begin{equation}\label{con3}\Delta<\frac{\mu_M}{2},\;\frac{\delta}{6},\;\frac{1-2\delta}{3}-\delta_M,\delta_M,\frac{1-\delta_M}{2}.\end{equation}

We denote now for $n\in\N$ and $1\le k\le8$ the element $S_{k,n}\in X$ given by
    $$S_{k,n}=(1-t(k))U_{\frac{3+(-1)^{n}}{2}}+t(k)\frac{V_{1,n}+V_{2,n}}{2}.$$

The element $S_{k,n}\in X$ will be very relevant for the proof of Theorem \ref{mainTH}. See Figure \ref{SK} to have a rough idea of what it looks like when restricted to $\{1\}\times\N$.

\begin{figure}
    \centering
    \includegraphics[scale=0.5]{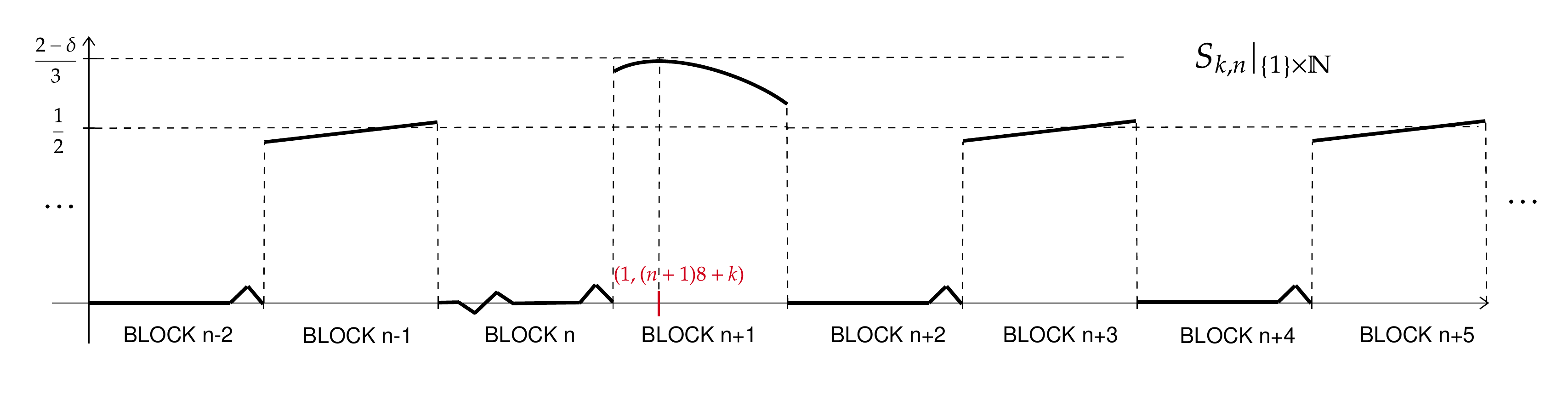}
    \caption{}
    \label{SK}
\end{figure}

For the latter choices of $\delta,M$ and $\Delta$, we have the following proposition.

\begin{proposition}\label{c1}
The following properties are satisfied for every $n\in\N$,
\begin{enumerate}
    \item\label{p2} $\|V_{i,n}\|=\frac{1-\delta}{3}$ and $\|U_i\|=1$ for $i=1,2$.
    \item \label{p5} $|(V_{1,n}+V_{2,n})(1,m)|\le\Delta$ for every $m\in\N\setminus\{(n+1)8+1,\dots,(n+1)8+8\}$.
    \item\label{p2ymedio} For every $k=1,\dots,8$,
    $$S_{k,n}(1,(n+1)8+k)> \frac{2-\delta}{3}-\delta_M+\frac{\Delta}{2}.$$
    \item\label{p3} For every $x\in\N^2\setminus\{(1,(n+1)8+k)\}$,
    $$|S_{k,n}(x)|\le S_{k,n}(1,(n+1)8+k)-\mu_M.$$
\end{enumerate}
\end{proposition}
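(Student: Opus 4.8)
I would establish all four items by inspecting the coordinatewise definitions of $U_i$, $V_{i,n}$ and $S_{k,n}$ block by block; only \eqref{p2ymedio} and \eqref{p3} involve any real computation. For \eqref{p2}: on BLOCK $n$ and on the row $2n+i-1$ the vector $V_{i,n}$ equals $(-1)^{i-1}\frac{1-\delta}{3}$ times a unit vector of $\ell_\infty$, possibly with a single entry shifted by $\pm\Delta$ (which cannot raise the supremum norm above $\frac{1-\delta}{3}$, since $0<\frac{1-\delta}{3}-\Delta<\frac{1-\delta}{3}$), so $\|V_{i,n}\|\geq\frac{1-\delta}{3}$; conversely every remaining nonzero coordinate of $V_{i,n}$ has modulus at most $\frac{1-2\delta}{3}+\Delta$ (on BLOCK $n+1$, where all coordinates are positive by Fact \ref{factou1}), at most $\frac{1-2\delta}{3}$ (on the other blocks of the parity of $n$), or equal to $\frac1{10}$ (on the other rows $p\geq2$), and $\frac{1-2\delta}{3}+\Delta<\frac{1-\delta}{3}$ and $\frac1{10}<\frac{1-\delta}{3}$ follow from $\Delta<\frac\delta6$ in \eqref{con3} and from \eqref{con0}. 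That $\|U_i\|=1$ is immediate from $\max_k u(k)=u(8)=1$ (Fact \ref{factou1}) and $\Delta<1$. For \eqref{p5}, $V_{1,n}+V_{2,n}$ restricted to $\{1\}\times\N$ is supported on BLOCK $n$ together with BLOCK $n+1$ (on the other blocks of the parity of $n$ the two summands are exact opposites, on the blocks of the opposite parity both vanish), and on BLOCK $n$ itself it equals $\delta_3-\delta_2$, of supremum norm $\Delta$.

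The key to \eqref{p2ymedio} and \eqref{p3} is the identity
$$S_{k,n}\big(1,8(n+1)+k\big)=(1-t(k))u(k)+t(k)\Big(v(k)-\tfrac{2+2\delta}{3}\Big)=h_k(x_k)-t(k)\tfrac{2+2\delta}{3},$$
where $h_k(x)=(1-t(k))f(x)+t(k)g(x)$ is the concave parabola with vertex $x_k$ from the proof of Fact \ref{factou2}; it holds because at that coordinate the $\delta_7$-perturbations of $V_{1,n}$ and $V_{2,n}$ cancel in the average, while $U_{(3+(-1)^n)/2}$ takes the value $u(k)$ there, BLOCK $n+1$ being exactly of the parity on which that $U$-vector equals $u$. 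Since $h_k(x_k)\geq h_k(x_8)=(1-t(k))u(8)+t(k)v(8)=1-t(k)\delta_M$ (using $u(8)=1$, $v(8)=1-\delta_M$ from Fact \ref{factou1}) and $t(k)<\tfrac12$, we get $S_{k,n}(1,8(n+1)+k)>1-\tfrac12(\delta_M+\tfrac{2+2\delta}{3})=\tfrac{2-\delta}{3}-\tfrac{\delta_M}{2}$, which exceeds $\frac{2-\delta}{3}-\delta_M+\frac\Delta2$ precisely because $\Delta<\delta_M$ in \eqref{con3}; this proves \eqref{p2ymedio}.

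For \eqref{p3} I would split $\N^2\setminus\{(1,8(n+1)+k)\}$ into four regions. (a) At $(1,8(n+1)+k')$ with $k'\neq k$ the value equals $h_k(x_{k'})-t(k)\frac{2+2\delta}{3}$; it is positive (indeed $>\frac{2-\delta}{3}-\delta_M>0$, as $h_k(x_{k'})\geq1-\delta_M$ by concavity) and is $\leq S_{k,n}(1,8(n+1)+k)-\mu_M$ because $h_k(x_k)-h_k(x_{k'})\geq\mu_M$ by the very definition \eqref{eqmu} of $\mu_M$. (b) On any other block of the parity of $n+1$ the value is $(1-t(k))u(k')\in(0,\,1-t(k)]$, and using $h_k(x_k)\geq h_k(x_8)$ once more one obtains $S_{k,n}(1,8(n+1)+k)-(1-t(k))\geq t(k)\big(\tfrac{1-2\delta}{3}-\delta_M\big)\geq\big(\tfrac12-\tfrac8M\big)\big(\tfrac{1-2\delta}{3}-\delta_M\big)\geq\mu_M$, the last inequality being a short rearrangement of \eqref{con1} (with $t(k)\geq t(8)=\tfrac12-\tfrac8M>0$). (c) On the blocks of the parity of $n$ (including BLOCK $n$) the modulus is at most $(1-t(k))\Delta<\Delta<\delta_M$. (d) On every row $p\geq2$ the modulus is, by the triangle inequality and $\|e_j\|=1$, at most $\frac1{10}+t(k)\big(\tfrac{1-\delta}{6}-\tfrac1{20}\big)<\frac{19}{120}$. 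In (c) and (d) these bounds are $\leq S_{k,n}(1,8(n+1)+k)-\mu_M$, since $S_{k,n}(1,8(n+1)+k)>\frac{2-\delta}{3}-\delta_M$ while $\delta_M+\mu_M<\frac{1-8\delta}{6}$ by \eqref{con1} and $\delta<\frac1{10}$ by \eqref{con0}. The only genuinely delicate step is region (b): one has to squeeze $t(k)\big(\tfrac{1-2\delta}{3}-\delta_M\big)\geq\mu_M$ out of \eqref{con1}, which is precisely the purpose for which \eqref{con1} was imposed; everything else reduces to comparing the finitely many explicit block-types in the definitions, so the argument, though long, is mechanical.
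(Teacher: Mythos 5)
Your proof is correct and follows essentially the same route as the paper's: a block-by-block coordinate check using Fact~\ref{factou1} and the constraints \eqref{con0}--\eqref{con3}, with the identity $S_{k,n}(1,8(n+1)+k)=h_k(x_k)-t(k)\tfrac{2+2\delta}{3}$ and the definition \eqref{eqmu} of $\mu_M$ as the engine for parts \eqref{p2ymedio}--\eqref{p3}. The only notable variation is in \eqref{p3}: where the paper treats all of $\{1\}\times\N$ outside BLOCK $n+1$ in a single Case 2 via the uniform bound $|S_{k,n}(x)|\le 1-t(k)+\Delta/2$ and the simple rearrangement $1-t(k)\le \tfrac{2-\delta}{3}-\delta_M-\mu_M$ of \eqref{con1}, you split this region by block parity and, for the parity-$(n+1)$ blocks, derive the sharper estimate $S_{k,n}(1,8(n+1)+k)-(1-t(k))\ge t(8)\bigl(\tfrac{1-2\delta}{3}-\delta_M\bigr)\ge\mu_M$ — a correct but slightly more delicate consequence of \eqref{con1}.
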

\begin{proof}\;\\

    \eqref{p2}: It is clear that $\|U_i\|=u(8)=1$. Now, taking into account the constraints of $\delta,M$ and $\Delta$, a straightforward case by case check yields that $\|V_{i,n}\|=\frac{1-\delta}{3}$.\\

    \eqref{p5}: The proof is straightforward from the definition.\\

    \eqref{p2ymedio}: By Fact \ref{factou1},
    %It is clear from the constrains above that $x_1<x_8\le1$ so that $x_1-x_1^2>0$. Also, since $x_k$ is strictly increasing in $k$, we have that $x_8^2-x_1^2>0$. Hence,
\begin{equation}\label{eq1}
\begin{aligned}
    u(1)+v(1)=2-(x_8-x_1)>2-\delta_M.
    \end{aligned}
\end{equation}
We also deduce that 
\begin{equation}\label{eq307}u(1)=1-(x_8-x_1)\stackrel{\text{Fact }\ref{factou1}}{>}1-\delta_M\stackrel{\eqref{con2}}{>}\frac{1-2\delta}{3}=v(1)-\frac{2+2\delta}{3}.\end{equation}
This, in addition to the fact that $1-t(k)>t(k)$,
yields
\begin{equation}\label{eq2}(1-t(k))u(1)+t(k)\Big(v(1)-\frac{2+2\delta}{3}\Big)\stackrel{\eqref{eq307}}{>}\frac{u(1)+v(1)-\frac{2+2\delta}{3}}{2}.\end{equation}
Finally, since $\mu_M>0$ we know that $(1-t(k))u(k)+t(k)(v(k))\ge(1-t(k))u(1)+t(k)v(1)$ and therefore
$$\begin{aligned}S_{k,n}(1,(n+1)8+k)&=(1-t(k))u(k)+t(k)\Big(v(k)-\frac{2+2\delta}{3}\Big)\\
&\;\ge(1-t(k))u(1)+t(k)\Big(v(1)-\frac{2+2\delta}{3}\Big)\\&\stackrel{\eqref{eq2}}{>}\frac{u(1)+v(1)-\frac{2+2\delta}{3}}{2}\\&\stackrel{\eqref{eq1}}{>}\frac{2-\delta_M-\frac{2+2\delta}{3}}{2}\\&\;=\frac{2-\delta}{3}-\frac{\delta_M}{2}\stackrel{\delta_M>\Delta}{>}\frac{2-\delta}{3}-\delta_M+\frac{\Delta}{2}.\end{aligned}$$

\eqref{p3}: We distinguish here between different cases depending on the element $x\in\N^2\setminus\{(1,(n+1)8+k)\}$:\\
\textbf{Case 1, }$x=(1,(n+1)8+j)$ for $j=1,\dots,8$ and $j\neq k$. In this case we have
$$\begin{aligned}|S_{k,n}(x)|=&(1-t(k))u(j)+t(k)\Big(v(j)-\frac{2+2\delta}{3}\Big)\\=&(1-t(k))u(k)+t(k)\Big(v(k)-\frac{2+2\delta}{3}\Big)\\&-\big((1-t(k))(u(k)-u(j))+t(k)(v(k)-v(j))\big)\\\stackrel{\eqref{eqmu}}{\le}&S_{k,n}(1,(n+1)8+k)-\mu_M.\end{aligned}$$
\textbf{Case 2,} $x=(1,q)$ with $q\neq (n+1)8+j$ for $j=1,\dots,8$. From property \eqref{p5} we know that $\big|\frac{V_{1,n}+V_{2,n}}{2}(x)\big|\le\Delta/2$ and thus
$$\begin{aligned}|S_{k,n}(x)|\le&(1-t(k))\big|U_{\frac{3+(-1)^n}{2}}(x)\big|+\Delta/2\\\stackrel{\eqref{p2}}{\le}&1-t(k)+\Delta/2\stackrel{\eqref{con1}}{\le} \frac{2-\delta}{3}-\delta_M-\mu_M+\Delta/2\\\stackrel{\eqref{p2ymedio}}{<}&S_{k,n}(1,(n+1)8+k)-\mu_M.\end{aligned}$$
\textbf{Case 3,} $x=(p,q)$ for some $p,q\in\N$ with $p\ge 2$. In this case 
$$|U_{\frac{3+(-1)^n}{2}}(x)|\le\frac{1}{10}\;\;\;\text{ and }\;\;\;\frac{1}{3}\stackrel{\eqref{con1}}{\le}\frac{2-\delta}{3}-\delta_M-\mu_M\stackrel{\eqref{p2ymedio}}{<}S_{k,n}(1,(n+1)8+k)-\mu_M,$$
so then
$$\begin{aligned}|S_{k,n}(x)|\le&(1-t(k))\big|U_{\frac{3+(-1)^n}{2}}(x)\big|+t(k)\frac{|V_{1,n}(x)|+|V_{2,n}(x)|}{2}\\\stackrel{\eqref{p2}}{\le}&(1-t(k))\frac{1}{10}+t(k)\frac{1-\delta}{3}<\frac{1}{3}< S_{k,n}(1,(n+1)8+k)-\mu_M.\end{aligned}$$
\end{proof}

The following lemma is crucial for our later purposes. Despite being large and seemingly involved, its proof is elementary and the computations needed are not so hard. However, we are forced to work distinguishing cases due to the nature of the definition of the vectors $U_i,V_{i,n}\in X$.

\begin{lemma}\label{prelim2}
    There exists $\ep_0>0$ such that if $E_1,E_2,W_1, W_2\in X$ satisfy $\|W_i-V_{i,n}\|,\|E_i-U_i\|\le\ep_0$ for $i=1,2$ and some $n\in\N$ then for $k=1,\dots,8$ we have that $\|Z_{k,n}\|\ge\frac{2-2\delta}{3}$ and 
    \begin{equation}\label{eqst}\|V_{i,n+1}\|,\Big\|(-1)^{i+1}V_{i,n+1}-\frac{2-2\delta}{3\|Z_{k,n}\|}Z_{k,n}\Big\|=\frac{1-\delta}{3}\;\;\;\;\forall k\neq i+1,\end{equation}
    where $Z_{k,n}=(1-t(k))E_{\frac{3+(-1)^{n}}{2}}+t(k)\frac{W_1+W_2}{2}.$
\end{lemma}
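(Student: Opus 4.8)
The statement is a direct computation with the explicit vectors, so the plan is to reduce everything to evaluating a handful of coordinates and then to propagate the perturbation bound $\ep_0$ through the (finitely many) norms appearing. First I would set $E_i = U_i + \eta_i$ and $W_i = V_{i,n} + \xi_i$ with $\|\eta_i\|,\|\xi_i\|\le\ep_0$, so that $Z_{k,n} = S_{k,n} + \rho_{k,n}$ where $\rho_{k,n} = (1-t(k))\eta_{\frac{3+(-1)^n}{2}} + t(k)\frac{\xi_1+\xi_2}{2}$ has norm at most $\ep_0$ (using $0<t(k)<1$). By Proposition \ref{c1}\eqref{p2ymedio} we have $S_{k,n}(1,(n+1)8+k) > \frac{2-\delta}{3} - \delta_M + \frac{\Delta}{2}$, so $\|Z_{k,n}\| \ge Z_{k,n}(1,(n+1)8+k) \ge \frac{2-\delta}{3} - \delta_M + \frac{\Delta}{2} - \ep_0$; since $\delta_M>\Delta$ and $\delta<1$, for $\ep_0$ small enough this is $\ge \frac{2-2\delta}{3}$, giving the first assertion. (In fact I would keep the sharper bound $\|Z_{k,n}\|\ge S_{k,n}(1,(n+1)8+k)-\ep_0$ for use below.)

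Next I would prove the two equalities in \eqref{eqst}. The equality $\|V_{i,n+1}\| = \frac{1-\delta}{3}$ is just Proposition \ref{c1}\eqref{p2} applied at index $n+1$, so nothing is needed there. For the second equality, fix $i\in\{1,2\}$ and $k\ne i+1$ and write $T = \frac{2-2\delta}{3\|Z_{k,n}\|}$; I want $\|(-1)^{i+1}V_{i,n+1} - T\,Z_{k,n}\| = \frac{1-\delta}{3}$. Split the norm over $\N^2$ using the block structure. On the crucial block, coordinate $(1,(n+1)8+j)$ for $j=1,\dots,8$: here $V_{i,n+1}(1,(n+1)8+j) = v(j) - \frac{2+2\delta}{3} + (-1)^i\delta_7(j)$ (the "$V_{1,n+1},V_{2,n+1}$ live in BLOCK $n+1$" rule), while $Z_{k,n}(1,(n+1)8+j)$ is $S_{k,n}(1,(n+1)8+j) + \rho_{k,n}(\cdot)$, and $S_{k,n}(1,(n+1)8+j) = (1-t(k))u(j) + t(k)(v(j)-\frac{2+2\delta}{3})$. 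The space $X$ was reverse-engineered so that, on this block, $T\,S_{k,n}$ agrees coordinatewise with $(-1)^{i+1}V_{i,n+1}$ up to the small $\delta_7,\delta_8,\delta_M$-type perturbations, and the sup of the differences over the block equals exactly $\frac{1-\delta}{3}$ (this is where the specific algebra of $\alpha,\beta,x_k,t(k)$ and the normalisation $\frac{2-2\delta}{3}=T\cdot S_{k,n}(\text{peak})+O(\delta_M)$ is used). On every coordinate $x\ne(1,(n+1)8+j)$, Proposition \ref{c1}\eqref{p3} gives $|S_{k,n}(x)|\le S_{k,n}(\text{peak})-\mu_M$, so $|T\,Z_{k,n}(x)| \le T(S_{k,n}(\text{peak})-\mu_M+\ep_0)$, which (since $T\cdot S_{k,n}(\text{peak})$ is within $O(\delta_M)<\mu_M/2$ of $\frac{2-2\delta}{3}$ by \eqref{con1}, \eqref{con3} and $\Delta<\mu_M/2$) stays strictly below $\frac{1-\delta}{3}$ once $\ep_0$ is small; and $|V_{i,n+1}(x)|\le\frac{1-2\delta}{3}$ off the support blocks and $\frac{1-\delta}{3}$ on its own support coordinates where $Z_{k,n}$ is $O(\ep_0)$ small, so $|(-1)^{i+1}V_{i,n+1}(x)-T Z_{k,n}(x)|$ never exceeds $\frac{1-\delta}{3}$ there. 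Thus the supremum is attained on the $(n+1)$-block and equals $\frac{1-\delta}{3}$.

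The bookkeeping obstacle — and the main work — is the case analysis forced by the piecewise definition of $U_i, V_{i,n}$: one must check the $\ell_1$-basis tails (coordinates $(m,k)$, $m\ge2$, which contribute $\frac{1}{10}e_j$ pieces and the $\frac{1-\delta}{3}e_1$ pieces), the "other-parity" blocks (where $V_{i,n}$ has the $\frac{1-2\delta}{3}$-entries), BLOCK $n$ itself (the $\frac{1-\delta}{3}-\delta_{i+1}$ entries), and the $\delta_8$-entries inside $U_i$, verifying in each region that the relevant difference is dominated by $\frac{1-\delta}{3}$ with strict slack of order $\mu_M$ or $\delta$, so that the $\ep_0$-perturbations do not spoil the equality. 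The key quantitative inputs are exactly Proposition \ref{c1}\eqref{p2}–\eqref{p3}, Facts \ref{factou1} and \ref{factou2}, and the constraints \eqref{con0}–\eqref{con3}; the choice of $\ep_0$ at the end is simply the minimum of finitely many positive gaps (one per region, each a positive quantity of the form $\mu_M - 2\Delta$, $\delta/3 - \delta_M$, etc., divided by a harmless constant). I would state $\ep_0$ only implicitly as "small enough for the finitely many estimates above" rather than optimise it. The condition $k\ne i+1$ is used precisely because $V_{i,n+1}$'s own BLOCK $n+1$ entry in the $(i+1)$-th slot carries the extra $(-1)^i\delta_7$ shift while $S_{k,n}$ at the peak coordinate $k=i+1$ would otherwise force the wrong value; excluding $k=i+1$ keeps the peak coordinate of $Z_{k,n}$ away from the coordinate where $V_{i,n+1}$ is most negative, which is what makes the two "$\frac{1-\delta}{3}$" quantities coincide.
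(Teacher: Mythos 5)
Your overall strategy mirrors the paper's: write $Z_{k,n}=S_{k,n}+\rho_{k,n}$ with $\|\rho_{k,n}\|\le\ep_0$, obtain $\|Z_{k,n}\|\ge\frac{2-2\delta}{3}$ from Proposition~\ref{c1}\eqref{p2ymedio}, quote Proposition~\ref{c1}\eqref{p2} for $\|V_{i,n+1}\|$, and then prove the remaining equality by a coordinate‑by‑coordinate case check with slack of order $\mu_M$, $\delta/3$, etc. That is the right plan. But there is a concrete error that undermines the central step.

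You state that on the peak block $V_{i,n+1}(1,(n+1)8+j)=v(j)-\frac{2+2\delta}{3}+(-1)^i\delta_7(j)$. This is the value of $V_{i,n+1}$ at $(1,(n+2)8+j)$, i.e.\ the "second case" of the definition with $n$ replaced by $n+1$. The correct value at $(1,(n+1)8+j)$ comes from the \emph{first} case of the definition, namely
$V_{i,n+1}(1,(n+1)8+j)=(-1)^{i-1}\bigl(\tfrac{1-\delta}{3}-\delta_{i+1}(j)\bigr)$, which for $j\ne i+1$ equals $(-1)^{i-1}\frac{1-\delta}{3}$. With your (incorrect) formula the coordinate at the peak $j=k$ gives a difference of absolute value close to $\frac13$ (for $i=1$) or close to $1$ (for $i=2$), neither of which is $\frac{1-\delta}{3}$, so the claimed attainment of the supremum at $\frac{1-\delta}{3}$ on the $(n+1)$-block fails. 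The same confusion also skews your explanation of the hypothesis $k\ne i+1$: the obstruction at $k=i+1$ comes from the $\delta_{i+1}$-perturbation (the $-\Delta$ offset in the $(i+1)$-th slot of BLOCK $n+1$), not from $\delta_7$, which lives in the next block and plays no role there.

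A secondary remark: you assert the equality "equals exactly $\frac{1-\delta}{3}$" on the $(n+1)$-block without separately giving the lower bound. The paper gets the lower bound for free from the reverse triangle inequality, $\bigl\|\tfrac{2-2\delta}{3\|Z_{k,n}\|}Z_{k,n}\bigr\|-\|V_{i,n+1}\|=\tfrac{2-2\delta}{3}-\tfrac{1-\delta}{3}=\tfrac{1-\delta}{3}$, leaving only the coordinatewise upper bound $\le\frac{1-\delta}{3}$ to be proved; including this step explicitly would both simplify and correct the argument.
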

\begin{proof}
We first choose $\ep_0$ satisfying some inequalities which will be used throughout the proof. Let us take $\ep_0>0$ such that
\begin{equation}\label{con4}\ep_0<\frac{\mu_M-2\Delta}{2},\;\frac{1}{40},\;\frac{\delta}{3}-\delta_M,\;\frac{\delta}{3}-2\Delta,\frac{1-8\delta}{6}-\frac{8}{M},\frac{1-10\delta}{60}.\end{equation}
Notice that thanks to the appropriate choice of $\delta,M$ and $\Delta$, the quantities in the right-hand side are strictly positive and hence such an $\ep_0$ exists.
Let us show that this $\ep_0$ satisfies the statement of Lemma \ref{prelim2}. From Properties \eqref{p2ymedio} and \eqref{p3} of Proposition \ref{c1} we get that for $k=1,\dots,8$,
\begin{equation}\label{normin}
    \|S_{k,n}\|=S_{k,n}(1,(n+1)8+k)>\frac{2-\delta}{3}-\delta_M.
\end{equation}
Hence, for every $k=1,\dots,8$ we have that
\begin{equation}\label{eq309}\begin{aligned}\|Z_{k,n}\|\ge&\|S_{k,n}\|-\ep_0\stackrel{\eqref{normin}}{>}\frac{2-\delta}{3}-\delta_M-\ep_0\\=&\frac{2-2\delta}{3}+(\delta/3-\delta_M-\ep_0)\stackrel{\eqref{con4}}{>}\frac{2-2\delta}{3}.\end{aligned}\end{equation}
We turn our attention to the proof of \eqref{eqst}. We already know from Proposition \ref{c1} that $\|V_{i,n+1}\|=\frac{1-\delta}{3}$ so that we only need to prove the second equality of \eqref{eqst}. It is immediate that
$$\Big\|(-1)^{i+1}V_{i,n+1}-\frac{2-2\delta}{3\|Z_{k,n}\|}Z_{k,n}\Big\|\ge\Big\|\frac{2-2\delta}{3\|Z_{k,n}\|}Z_{k,n}\Big\|-\|V_{i,n+1}\|\stackrel{\eqref{p2}}{=}\frac{1-\delta}{3}.$$
Therefore, it is enough to show that for every $x\in\N^2$ and $k\neq i+1$,
\begin{equation}\label{mainpro}
    \Big|\Big((-1)^{i+1}V_{i,n+1}-\frac{2-2\delta}{3\|Z_{k,n}\|}Z_{k,n}\Big)(x)\Big|\le\frac{1-\delta}{3}.
\end{equation}
We proceed by splitting the proof of inequality \eqref{mainpro} into different cases.

\textbf{Case 1,} $x=(1,(n+1)8+k)$. In this case
$$V_{i,n+1}(x)=(-1)^{i-1}\Big(\frac{1-\delta}{3}-\delta_{i+1}(k)\Big)\stackrel{k\neq i+1}{=}(-1)^{i-1}\frac{1-\delta}{3},$$
and since by \eqref{con4} we know that $2\ep_0<\mu_M$ then for every $z\in\N^2\setminus\{x\}$,
$$\begin{aligned}
    |Z_{k,n}(z)|\le|S_{k,n}(z)|+\ep_0&\stackrel{\eqref{p3}}{\le} S_{k,n}(x)-\mu_M+\ep_0\\&\le Z_{k,n}(x)-\mu_M+2\ep_0\\&<Z_{k,n}(x).
\end{aligned}$$
This shows that 
\begin{equation}\label{Zeq}
    \|Z_{k,n}\|=Z_{k,n}(1,(n+1)8+k).
\end{equation}
Therefore,
$$\Big|\Big((-1)^{i+1}V_{i,n+1}-\frac{2-2\delta}{3\|Z_{k,n}\|}Z_{k,n}\Big)(x)\Big|\stackrel{\eqref{Zeq}}{=}\Big|\frac{1-\delta}{3}-\frac{2-2\delta}{3}\Big|=\frac{1-\delta}{3}.$$

\textbf{Case 2,} $x=(1,(n+1)8+j)$ for $j\in\{1,\dots,8\}\setminus\{k\}$. In this case,
\begin{equation}\label{eq301}V_{i,n+1}(x)=(-1)^{i-1}\Big(\frac{1-\delta}{3}-\delta_{i+1}(j)\Big),\end{equation}
\begin{equation}\label{eq302}
    S_{k,n}(x)=(1-t(k))u(j)+t(k)\Big(v(j)-\frac{2+2\delta}{3}\Big).
\end{equation}
Now, since $\delta_M\stackrel{\eqref{con2}}{<}\frac{\delta}{3}\stackrel{\eqref{con0}}{<}\frac{1}{30}$ it holds that
\begin{equation}\label{eq303}v(j)-\frac{2+2\delta}{3}\stackrel{\text{Fact }\ref{factou1}}{\ge}1-\delta_M-\frac{2+2\delta}{3}>0.\end{equation}
Therefore, taking into account that $1-t(k)\ge1/2$ we get that
\begin{equation}\label{eq304}
\begin{aligned}
Z_{k,n}(x)&\ge S_{k,n}(x)-\ep_0\stackrel{\eqref{eq302}}{=}(1-t(k))u(j)+t(k)\Big(v(j)-\frac{2+2\delta}{3}\Big)-\ep_0\\&\stackrel{\eqref{eq303}}{\ge}\frac{u(j)}{2}-\ep_0\stackrel{\text{Fact }\ref{factou1}}{\ge}\frac{1-\delta_M}{2}-\ep_0>0.
\end{aligned}
\end{equation}
The last inequality holds since $\delta_M<\frac{1}{30}$ and $\ep_0\stackrel{\eqref{con4}}{<}\frac{1}{40}$. Combining previous inequalities we get
\begin{equation}\label{eq305}
    \Big((-1)^{i+1}V_{i,n+1}-\frac{2-2\delta}{3\|Z_{k,n}\|}Z_{k,n}\Big)(x)\stackrel{\eqref{eq301}\eqref{eq304}}{<}\frac{1-\delta}{3}.
\end{equation}
%$$\|Z_{k,n}\|\ge\|\widetilde Z_{k,n}\|-\ep_0\stackrel{\eqref{p2ymedio}}{>}\frac{2-2\delta}{6}\;\;\;\Rightarrow\;\;\;\frac{2-2\delta}{3\|Z_{k,n}\|}\ge1/2.$$
Since 
\begin{equation}\label{eq400}
    \|S_{k,n}\|\le(1-t(k))\|U_{\frac{3+(-1)^n}{2}}\|+t(k)\frac{\|V_{1,n}\|+\|V_{2,n}\|}{2}\stackrel{\eqref{p2}}{<}1,
\end{equation}
and $\ep_0\stackrel{\eqref{con4}}{<}\frac{1-4\delta}{3}$ we have that
\begin{equation}\label{help}\|Z_{k,n}\|\le\|S_{k,n}\|+\ep_0\stackrel{\eqref{eq400}}{<}1+\frac{1-4\delta}{3}=\frac{4-4\delta}{3}.\end{equation}
Also,
\begin{equation}\label{eq300}|Z_{k,n}(x)|\le|S_{k,n}(x)|+\ep_0\stackrel{\eqref{p3}}{\le}\|S_{k,n}\|-\mu_M+\ep_0\le\|Z_{k,n}\|-\mu_M+2\ep_0.\end{equation}
Using the above inequalities we deduce that
\begin{equation}\label{eq306}\begin{aligned}
    \Big(\frac{2-2\delta}{3\|Z_{k,n}\|}Z_{k,n}-(-1)^{i+1}&V_{i,n+1}\Big)(x)\stackrel{\eqref{eq301}}{\le}\frac{2-2\delta}{3\|Z_{k,n}\|}|Z_{k,n}(x)|-|V_{i,n+1}(x)|\\\stackrel{\eqref{eq300}}{\le}&\Big(\frac{2-2\delta}{3}-\frac{2-2\delta}{3\|Z_{k,n}\|}(\mu_M-2\ep_0)\Big)-\Big(\frac{1-\delta}{3}-\Delta\Big)\\\stackrel{\eqref{help}}{<}& \frac{1-\delta}{3}-\frac{1}{2}(\mu_M-2\ep_0-2\Delta)\stackrel{\eqref{con4}}{<}\frac{1-\delta}{3}.
\end{aligned}\end{equation}
This case is hence done by combining \eqref{eq305} and \eqref{eq306}.

%\textbf{Case 3,} $x=(1,n8+j)$ with $j=1,\dots,8$. In this case,
%$$V_{i,n+1}(x)=0\;\;,\;\;(V_{1,n}+V_{2,n})(x)\stackrel{\eqref{p5}}{=}0\;\;,\;\; U_{\frac{3+(-1)^n}{2}}(x)=\delta_8(j).$$
%Therefore,
%$$|S_{k,n}(x)|=|(1-t(k))\delta_8(j)|<\Delta.$$
%Thus, we conclude that
%$$\begin{aligned}|[(-1)^{i+1}V_{i,n+1}-\frac{2-2\delta}{3\|Z_{k,n}\|}Z_{k,n}](x)|=&|\frac{2-2\delta}{3\|Z_{k,n}\|}Z_{k,n}(x)|\le|Z_{k,n}(x)|\\\le&|S_{k,n}(x)|+\ep_0< \Delta+\ep_0\le\frac{1-\delta}{3}.\end{aligned}$$

\textbf{Case 3,} $x=(1,(n+2)8+j)$ with $1\le j\le 8$. In this case we have
$$V_{i,n+1}(x)=v(j)-\frac{2+2\delta}{3}+(-1)^i\delta_7(j),$$
which taking into account that $v(j)\stackrel{\text{Fact }\ref{factou1}}{\le}1$ implies that
\begin{equation}\label{eq4}|V_{i,n+1}(x)|\le\frac{1-2\delta}{3}+\Delta.\end{equation}
We now compute $|S_{k,n}(x)|$. Since we have that $|(V_{1,n}+V_{2,n})(x)|\stackrel{\eqref{p5}}{\le}\Delta$ and $U_{\frac{3+(-1)^n}{2}}(x)=\delta_8(j)$, it clearly follows that
\begin{equation}\label{eq5}|S_{k,n}(x)|\le (1-t(k))|\delta_8(j)|+t(k)\Delta\le\Delta.\end{equation}
Therefore,
$$\begin{aligned}\Big|\Big((-1)^{i+1}V_{i,n+1}-\frac{2-2\delta}{3\|Z_{k,n}\|}Z_{k,n}\Big)(x)\Big|&\stackrel{\eqref{eq309}}{\le}|V_{i,n+1}(x)|+|Z_{k,n}(x)|\\&\le|V_{i,n+1}(x)|+|S_{k,n}(x)|+\ep_0\\&\stackrel{\eqref{eq4}\eqref{eq5}}{\le}\frac{1-2\delta}{3}+2\Delta+\ep_0\\&=\frac{1-\delta}{3}-\Big(\frac{\delta}{3}-2\Delta-\ep_0\Big)\stackrel{\eqref{con4}}{<}\frac{1-\delta}{3}.\end{aligned}$$

\textbf{Case 4,} $x=(1,m8+j)$ where $m\in 2\N_0+\frac{1-(-1)^n}{2}\setminus\{n+2\}$ and $1\le j\le 8$. In this case by definition we get that
$$V_{i,n+1}(x)=0\;\;,\;\;|(V_{1,n}+V_{2,n})(x)|\stackrel{\eqref{p5}}{\le}\Delta\;\;,\;\;U_{\frac{3+(-1)^n}{2}}(x)=\delta_8(j).$$
Hence, $|S_{k,n}(x)|\le(1-t(k))\Delta+t(k)\Delta=\Delta$ and we conclude
$$\begin{aligned}\Big|\Big((-1)^{i+1}V_{i,n+1}-\frac{2-2\delta}{3\|Z_{k,n}\|}Z_{k,n}\Big)(x)\Big|\stackrel{\eqref{eq309}}{\le}|Z_{k,n}(x)|\le|S_{k,n}(x)|+\ep_0\le\Delta+\ep_0\le\frac{1-\delta}{3}.\end{aligned}$$
The very last inequality follows from the constraints established for $\Delta$ and $\ep_0$ in \eqref{con3} and \eqref{con4}.

\textbf{Case 5,} $x=(1,m8+j)$ with $m$ distinct from the previous cases and $1\le j\le8$, that is, $m$ is any number with the opposite parity to $n$ except $n+1$:
$$m\in 2\N_0+\frac{1-(-1)^{n+1}}{2}\setminus\{n+1\}.$$
In this case,
\begin{equation}\label{eq311}V_{i,n+1}(x)=(-1)^{i-1}\Big(\frac{1-2\delta}{3}-\delta_6(j)\Big)\;\;\;\;\;,\;\;\;\;\;U_{\frac{3+(-1)^n}{2}}(x)=u(j).\end{equation}
Then, since $\delta_M\stackrel{\eqref{con2}}{<}\delta/3<1/30$ and $\Delta\stackrel{\eqref{con3}}{<}\delta/6\stackrel{\eqref{con0}}{<}1/60$ we have that 
\begin{equation}\label{eq310}\begin{aligned}S_{k,n}(x)&\stackrel{\eqref{eq311}}{=}(1-t(k))u(j)+t(k)\frac{(V_{1,n}+V_{2,n})(x)}{2}\\&\stackrel{\eqref{p5}}{\ge}(1-t(k))u(j)-\Delta\\&\stackrel{\text{Fact }\ref{factou1}}{\ge}(1-t(k))(1-\delta_M)-\Delta\\&\ge\frac{1-\delta_M}{2}-\Delta>0.\end{aligned}\end{equation} Then,
\begin{equation}\label{eqa}\begin{aligned}\Big((-1)^{i+1}V_{i,n+1}-\frac{2-2\delta}{3\|Z_{k,n}\|}S_{k,n}\Big)(x)&\stackrel{\eqref{eq310}}{\le}(-1)^{i+1}V_{i,n+1}(x)\\&\stackrel{\eqref{eq311}}{=}\frac{1-2\delta}{3}-\delta_6(j)\\&\stackrel{\ep_0<\delta/3}{<}\frac{1-\delta}{3}-\ep_0.\end{aligned}\end{equation}
Also, we have that
\begin{equation}\label{eq312}S_{k,n}(x)\stackrel{\eqref{eq311}}{=}(1-t(k))u(j)+t(k)\frac{(V_{1,n}+V_{2,n})(x)}{2}\stackrel{\eqref{p5}}{\le}(1-t(k))u(j)+\Delta,\end{equation}
and so,
\begin{equation}\label{eqb}\begin{aligned}\Big(\frac{2-2\delta}{3\|Z_{k,n}\|}S_{k,n}-(-1)^{i+1}&V_{i,n+1}\Big)(x)\stackrel{\eqref{eq309}}{\le} S_{k,n}(x)-(-1)^{i+1}V_{i,n+1}(x)\\&\stackrel{\eqref{eq311}\eqref{eq312}}{\le}(1-t(k))u(j)+\Delta-\frac{1-2\delta}{3}+\delta_6(j)\\&\stackrel{u(j)\le1}{\le} \frac{1}{2}+\frac{k}{M}-\frac{1-2\delta}{3}+2\Delta\\&\stackrel{\Delta<\delta/6}{<}\frac{1}{6}+\frac{8}{M}+\delta\le\frac{1-\delta}{3}-\ep_0.\end{aligned}\end{equation}
Notice that $u(j)\le1$ follows from Fact \ref{factou1} and $\Delta<\delta/6$ is one of the inequalities in \eqref{con3}.

Inequalities \eqref{eqa} and \eqref{eqb} yield
\begin{equation}\label{eq313}\Big|\Big((-1)^{i+1}V_{i,n+1}-\frac{2-2\delta}{3\|Z_{k,n}\|}S_{k,n}\Big)(x)\Big|<\frac{1-\delta}{3}-\ep_0,\end{equation}
and therefore we finish this case using the triangle inequality
$$\begin{aligned}
    \Big|\Big((-1)^{i+1}V_{i,n+1}-\frac{2-2\delta}{3\|Z_{k,n}\|}Z_{k,n}\Big)(x)\Big|&\le \Big|\Big((-1)^{i+1}V_{i,n+1}-\frac{2-2\delta}{3\|Z_{k,n}\|}S_{k,n}\Big)(x)\Big|\\&\;\;\;+\frac{2-2\delta}{3\|Z_{k,n}\|}\|S_{k,n}(x)-Z_{k,n}(x)\|\\&\stackrel{\eqref{eq309}\eqref{eq313}}{<}\frac{1-\delta}{3}.
\end{aligned}$$

\textbf{Case 6,} $x=(2(n+1)+i-1,j)$ where $j\in\N$. In this case,
\begin{equation}\label{eq314}\begin{aligned}V_{i,n+1}(x)=(-1)^{i-1}\frac{1-\delta}{3}e_1(j)\;\;\;\;,&\;\;\;\;V_{1,n}(x)=V_{2,n}(x)=0,\\U_{\frac{3+(-1)^n}{2}}(x)=\frac{1}{10}e_1(j)\;\;\;\;,&\;\;\;\;S_{k,n}(x)=(1-t(k))\frac{1}{10}e_1(j).\end{aligned}\end{equation}
%Also $\frac{2-2\delta}{3\|Z_{k,n}\|},1-t(k)\stackrel{\eqref{help}}{\ge}\frac{1}{2}$ which implies
First, we compute
\begin{equation}\label{eq315}
    \frac{2-2\delta}{3\|Z_{k,n}\|}\cdot\frac{1}{10}(1-t(k))\stackrel{\eqref{eq309}}{<}\frac{1}{10}\stackrel{\eqref{con0}}{<}\frac{1-\delta}{3}.
\end{equation}
Now, taking into account that $1-t(k)\ge\frac{1}{2}$ we have
$$\begin{aligned}\Big|\Big((-1)^{i+1}V_{i,n+1}-\frac{2-2\delta}{3\|Z_{k,n}\|}S_{k,n}\Big)(x)\Big|&\stackrel{\eqref{eq314}}{=}\Big|\Big(\frac{1-\delta}{3}-\frac{2-2\delta}{3\|Z_{k,n}\|}\cdot\frac{1}{10}(1-t(k))\Big)e_1(j)\Big|\\&\stackrel{\eqref{eq315}}{=}\Big(\frac{1-\delta}{3}-\frac{2-2\delta}{3\|Z_{k,n}\|}\cdot\frac{1}{10}(1-t(k))\Big)|e_1(j)|\\&\stackrel{\eqref{help}}{\le}\frac{1-\delta}{3}-\frac{1}{40}\stackrel{\eqref{con4}}{<}\frac{1-\delta}{3}-\ep_0.\end{aligned}$$
%Also,
%$$\begin{aligned}[\frac{2-2\delta}{3\|Z_{k,n}\|}S_{k,n}-(-1)^{i+1}V_{i,n+1}](x)&\le S_{k,n}(x)-(-1)^{i+1}V_{i,n+1}(x)\\&=((1-t(k))\frac{1}{10}-\frac{1-\delta}{3})e_1(j)\\&\le\frac{1-\delta}{3}-\frac{1}{20}<\frac{1-\delta}{3}-\ep_0.\end{aligned}$$
%Analogously to the previous case, we may combine the last inequalities together and obtain that
%$$|[(-1)^{i+1}V_{i,n+1}-\frac{2-2\delta}{3\|Z_{k,n}\|}S_{k,n}](x)|<\frac{1-\delta}{3}-\ep_0.$$
Therefore, by the triangle inequality and the last shown equation we get
$$\begin{aligned}
    \Big|\Big((-1)^{i+1}V_{i,n+1}-\frac{2-2\delta}{3\|Z_{k,n}\|}Z_{k,n}\Big)(x)\Big|&\le \Big|\Big((-1)^{i+1}V_{i,n+1}-\frac{2-2\delta}{3\|Z_{k,n}\|}S_{k,n}\Big)(x)\Big|+\ep_0\\&<\frac{1-\delta}{3}.
\end{aligned}$$

\textbf{Case 7,} $x=(p,j)\in\N^2$ different from the previous cases. In this case, $p\ge 2$ and hence we know that $V_{i,n}(x)\in\big\{(-1)^{i-1}\frac{1-\delta}{3}e_1(j),\frac{1}{10}e_{2n+i-1}(j),0\big\}$ where it cannot happen that $V_{1,n}(x)=V_{2,n}(x)=(-1)^{i-1}\frac{1-\delta}{3}e_1(j)$. Then,
\begin{equation}\label{eq316}\Big|\frac{V_{1,n}(x)+V_{2,n}(x)}{2}\Big|\le\frac{\frac{1-\delta}{3}+\frac{1}{10}}{2}=\frac{13-10\delta}{60}.\end{equation}
Also, since $x$ is different from that of Case 6, necessarily $p\neq 2(n+1)+i-1$ so that $V_{i,n+1}(x)\in\big\{0,\frac{1}{10}e_{2(n+1)+i-1}(j)\big\}$. Hence,
\begin{equation}\label{eq317}|V_{i,n+1}|\le\frac{1}{10}\;\;\text{ and }\;\;U_{\frac{3+(-1)^n}{2}}(x)=\frac{1}{10}e_1(j).\end{equation}
This, together with the fact that $\frac{1}{10}\stackrel{\eqref{con0}}{<}\frac{13-10\delta}{60}$ yields
\begin{equation}\label{eq318}
    \begin{aligned}
        |S_{k,n}(x)|&\le(1-t(k))\big|U_{\frac{3+(-1)^n}{2}}(x)\big|+t(k)\Big|\frac{V_{1,n}(x)+V_{2,n}(x)}{2}\Big| \\&\stackrel{\eqref{eq316}\eqref{eq317}}{\le}(1-t(k))\frac{1}{10}+t(k)\frac{13-10\delta}{60}<\frac{13-10\delta}{60}.
    \end{aligned}
\end{equation}
Therefore, taking into account that by \eqref{con4}, $\ep_0\le\frac{1-10\delta}{60}$ we finally conclude
$$\begin{aligned}
    \Big|\Big((-1)^{i+1}V_{i,n+1}-\frac{2-2\delta}{3\|Z_{k,n}\|}Z_{k,n}\Big)(x)\Big|&\stackrel{\eqref{eq309}}{\le} |V_{i,n+1}(x)|+|S_{k,n}(x)|+\ep_0\\&\stackrel{\eqref{eq317}\eqref{eq318}}{<}\frac{1}{10}+\frac{13-10\delta}{60}+\ep_0\le\frac{1-\delta}{3}.
\end{aligned}$$
\end{proof}

Our next objective is to prove that $(-1)^{i+1}V^i_{n+1}$ is the unique vector (up to $\ep$) satisfying the thesis of Lemma \ref{prelim2} (up to $\ep$). The proof relies on the $\ell_1$-like behaviour of our previously defined vectors.

\begin{lemma}\label{prelim3}
    There exists $C>0$ such that if $Z_{1,n}, \dots,Z_{8,n}$ are like in Lemma \ref{prelim2} for some $n\in\N$ then whenever there is $B\in X$ satisfying
    $$\|B\|,\Big\|B-\frac{2-2\delta}{3\|Z_{k,n}\|}Z_{k,n}\Big\|\le\frac{1-\delta}{3}+\ep,\;\;\;\;\forall k\neq i_0+1,$$
    for some $i_0\in\{1,2\}$ and $\ep>0$ it follows that
    $$\|B-(-1)^{i_0+1}V_{i_0,n+1}\|\le C\ep.$$
\end{lemma}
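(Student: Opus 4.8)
The plan is to exploit the $\ell_1$-like structure of the construction, which is exactly what Lemma \ref{prelim1} quantifies: the vectors $U_1,U_2,(V_{i,m})_{m}$ together with the constant-one vectors behave like a block $\ell_1$-basis on suitable coordinate slabs, so that a vector which is ``close'' to a prescribed combination in norm must have ``close'' coordinates in the expansion. Concretely, I would first write $B\in X$ in terms of the generators, and restrict attention to the coordinate slabs $\{1\}\times\{(n+1)8+1,\dots,(n+1)8+8\}$ and the $\ell_1$-slabs $\{2(n+1)+i_0-1\}\times\N$, where the relevant vectors are linearly independent together with the constant vector $c$. The point is that on each such finite slab, being within $\frac{1-\delta}{3}+\ep$ of $\frac{2-2\delta}{3\|Z_{k,n}\|}Z_{k,n}$ simultaneously for several values of $k$ forces $B$ to ``look like'' $(-1)^{i_0+1}V_{i_0,n+1}$ up to a controlled multiple of $\ep$.

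The key mechanism is the following: by Lemma \ref{prelim2}, $(-1)^{i_0+1}V_{i_0,n+1}$ is a genuine midpoint of $\{0\}$ and $\{\frac{2-2\delta}{3\|Z_{k,n}\|}Z_{k,n}: k\neq i_0+1\}$, and by Case 1 of that proof the vector $\frac{2-2\delta}{3\|Z_{k,n}\|}Z_{k,n}$ attains its norm precisely at the coordinate $(1,(n+1)8+k)$, where its value is $\frac{2-2\delta}{3}$. So for $B$ with $\|B\|\le\frac{1-\delta}{3}+\ep$ and $\|B-\frac{2-2\delta}{3\|Z_{k,n}\|}Z_{k,n}\|\le\frac{1-\delta}{3}+\ep$, evaluating at $x=(1,(n+1)8+k)$ gives $|B(x)|\le\frac{1-\delta}{3}+\ep$ and $|B(x)-\frac{2-2\delta}{3}|\le\frac{1-\delta}{3}+\ep$, which pins $B(1,(n+1)8+k)$ to within $2\ep$ of $\frac{1-\delta}{3}$ — matching $V_{i_0,n+1}(1,(n+1)8+k)$ (recall $k\neq i_0+1$ so the $\delta_{i_0+1}(k)$ perturbation vanishes). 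Running this for the (at least seven) admissible indices $k$ nails down a whole block of coordinates of $B$, hence the corresponding coefficients in the block-basis expansion via Lemma \ref{prelim1}, with the constant $K$ from that lemma. The same scheme applied on the $\ell_1$-slab where $V_{i_0,n+1}$ equals $(-1)^{i_0+1}\frac{1-\delta}{3}e_1$ while the $Z_{k,n}$ are much smaller there, together with the slabs controlling $U_1,U_2$ and the other $V_{i,m}$, forces the remaining coefficients to be $O(\ep)$. Summing the finitely many coordinate/coefficient estimates and invoking the equivalence to the $\ell_1$-basis yields $\|B-(-1)^{i_0+1}V_{i_0,n+1}\|\le C\ep$ for a constant $C$ depending only on $\delta,M,\Delta$ (through $K$ and $\mu_M$), not on $n$ or $\ep$.

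A subtlety to handle carefully is uniformity in $n$: a priori the constant $K$ from Lemma \ref{prelim1} could depend on which finite slab one looks at, but by translation-invariance of the construction (all the slabs are isometric copies of the same finite pattern built from $u,v,c,\delta_i,e_j$) the relevant $K$ is the same for every $n$, so $C$ is genuinely absolute. One must also be careful that the index $k$ ranges over $\{1,\dots,8\}\setminus\{i_0+1\}$, i.e. seven values, which is more than enough to cover the eight block coordinates up to the single excised one — and that excised coordinate is recovered from the norm bound $\|B\|\le\frac{1-\delta}{3}+\ep$ directly. The main obstacle I anticipate is purely bookkeeping: the vectors $U_i$ and $V_{i,n}$ have genuinely different behaviour on the various slabs (constant-type blocks, $\delta_8$-blocks, $\ell_1$-blocks, and ``elsewhere''), so one has to organize the coordinate comparison into the same case structure as in Lemma \ref{prelim2} and check in each case that the two-sided bound forces $|B(x)-(-1)^{i_0+1}V_{i_0,n+1}(x)|=O(\ep)$; there is no conceptual difficulty, but the estimates must be assembled so that only finitely many of them (a number independent of $n$) contribute, which is precisely where the $\ell_1$-equivalence and Lemma \ref{prelim1} do the work of converting finitely many coordinate errors into a single norm error.
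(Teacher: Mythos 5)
Your proposal follows essentially the same route as the paper: restrict to the seven coordinates $y_k=(1,(n+1)8+k)$, $k\ne i_0+1$, use the fact (from Case~1 of Lemma~\ref{prelim2}) that $\frac{2-2\delta}{3\|Z_{k,n}\|}Z_{k,n}$ attains its norm with value $\frac{2-2\delta}{3}$ at $y_k$ so that the two one-sided bounds squeeze $B(y_k)$ to within $\ep$ of $\frac{1-\delta}{3}$, feed this into Lemma~\ref{prelim1} to control the coefficients of $U_1,U_2,V_{1,n},V_{2,n},V_{i_1,n+1}$ and the collapsed $w$-direction, and then pass to the $\ell_1$-slab $\{2(n+1)+i_0-1\}\times\N$ to bound the tail coefficients $\rho_{i,m}$ via the $\ell_1$-equivalence, with uniformity in $n$ exactly because the same finite pattern recurs. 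The only quibble is that you do not need to ``recover'' the excised coordinate $y_{i_0+1}$ at all (the paper simply works in the $7$-dimensional $\ell_\infty(\Gamma)$ where the relevant restricted vectors are shown to be linearly independent by a determinant check), and your suggested recovery from the bare bound $\|B\|\le\frac{1-\delta}{3}+\ep$ would not actually pin $B(y_{i_0+1})$ near $V_{i_0,n+1}(y_{i_0+1})=(-1)^{i_0-1}\big(\frac{1-\delta}{3}-\Delta\big)$; fortunately the rest of your argument never uses it, so this is a harmless detour rather than a genuine gap.
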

\begin{proof}We may assume without loss of generality that
$$B\in\text{span}\Big(\{V_{i,n}\}_{\substack{i=1,2\\n\in\N}}\cup\{U_1,U_2\}\Big).$$
    Let us consider $y_k=(1,(n+1)8+k)$ for $k\in\{1,\dots,8\}$ and $\Gamma=\{y_k\}_{\substack{k=1,\dots,8\\k\neq i_0+1}}$. Now, we consider the vectors $w, c,b,u_1,u_2,v_{i,m}\in\ell_\infty(\Gamma)\equiv\ell_\infty^7$ for $i=1,2$ and $m\in\N$ given by
    $$w(y_k)=\frac{1-2\delta}{3}-\delta_6(k)\;\;,\;\;c(y_k)=1\;\;,\;\;b(y_k)=B(y_k),$$
    $$u_1(y_k)=U_1(y_k)\;\;,\;\;u_2(y_k)=U_2(y_k)\;\;,\;\;v_{i,m}(y_k)=V_{i,m}(y_k).$$
    \textbf{Claim.} If $i_1\in\{1,2\}\setminus\{i_0g\}$ then the vectors $c,v_{i_1,n+1},v_{1,n},v_{2,n},u_1,u_2,w$ are linearly independent vectors from $\ell_\infty(\Gamma)$.
    \begin{proof}[Proof of the Claim]
    Let us first define the vectors $p,q\in\ell_\infty(\Gamma)$ as
    $$p(y_k)=g(x_k)\;\;\;,\;\;\;q(y_k)=f(x_k)\;\;\;\;\;\forall k\in\{1,\dots,8\}\setminus\{i_0+1\}.$$
    Consider also $e_s\in\ell_\infty(\Gamma)$ for $s\in\N$ as
    $$e_s(y_k)=\begin{cases}1\;\;\;&\text{if }k=s\\0&\text{if }K\neq s.\end{cases}$$
    If one takes into account the immediate fact that $p$, $q$ and $c$ are respectively the evaluation in $\{x_k\}_{\substack{k=1,\dots,8\\k\neq i_0+1}}$ of a polynomial of degree exactly 2, a polynomial of degree exactly 1 and a non-zero constant polynomial, it follows that the vectors from  $\beta=\{p,q,c,e_{i_1+1},e_6,e_7,e_8\}$ are linearly independent. Hence, $\beta$ is a basis for $\ell_\infty(\Gamma)$ and we just need to compute the determinant of the matrix $M$ whose rows are the coordinates of the vectors $c,v_{i_1,n+1},v_{1,n},v_{2,n},u_1,u_2,w$ for the basis $\beta$. It is easy to check that
    $$M=\begin{pmatrix} 
    0 & 0 & 1 & 0 & 0 & 0 & 0\\
    0 & 0 & (-1)^{i_1-1}\frac{1-\delta}{3} & (-1)^{i_1}\Delta & 0 & 0 & 0\\
    1 & 0 & -\frac{2+2\delta}{3} & 0 & 0 & -\Delta & 0\\
    1 & 0 & -\frac{2+2\delta}{3} & 0 & 0 & \Delta & 0\\
    0 & 1 & 0 & 0 & 0 & 0 & 0\\
    0 & 0 & 0 & 0 & 0 & 0 & \Delta\\
    0 & 0 & \frac{1-2\delta}{3} & 0 & -\Delta & 0 & 0
    \end{pmatrix}.$$
    To show that $det(M)\neq0$ it suffices to repeatedly use the Laplace expansion of the determinant.
    \end{proof}
    Let us continue now the proof of Lemma \ref{prelim3}. We consider $i_1\in\{1,2\}$ distinct from $i_0$. There must exist unique $\lambda,\lambda_1,\dots,\lambda_5,\rho_{i,m}\in\R$ for $i=1,2$ and $m\in\N\setminus\{n,n+1\}$ such that just finitely many of them are nonzero and satisfy that
    \begin{equation}\label{eq8}B=\lambda(-1)^{i_0+1}V_{i_0,n+1}+\lambda_1V_{i_1,n+1}+\lambda_{2}V_{1,n}+\lambda_3V_{2,n}+\lambda_4U_1+\lambda_5 U_2+\sum\limits_{\substack{i=1,2\\m\in\N\setminus\{n,n+1\}}} \rho_{i,m}V_{i,m}.\end{equation}
    If $m\in\N\setminus\{n,n+1\}$ and $i=1,2$ then
    $$v_{i,m}(y_k)=\begin{cases}(-1)^{i-1}\big(\frac{1-2\delta}{3}-\delta_6(k)\big)=(-1)^{i-1}w(y_k)\;&\text{ if }m\neq n\text{ mod }2,\\0&\text{ if }m=n\text{ mod }2.\end{cases}$$
    Therefore, since $\frac{1-\delta}{3}c=(-1)^{i_0-1}v_{i_0,n+1}$ we have that
    \begin{equation}\label{eq9}b=\lambda\frac{1-\delta}{3}c+\lambda_1v_{i_1,n+1}+\lambda_{2}v_{1,n}+\lambda_3v_{2,n}+\lambda_4u_1+\lambda_5 u_2+\lambda_6w,\end{equation}
    where $\lambda_6=\sum\limits_{\substack{m\in\N\setminus\{n,n+1\}\\m\neq n\text{ mod }2}}\rho_{1,m}-\rho_{2,m}$.
    Also, for $i=1,2$,
    $$v_{i,n}(y_k)=v(k)-\frac{2+2\delta}{3}+(-1)^{i}\delta_7(k)\;\;,\;\;v_{i,n+1}(y_k)=(-1)^{i-1}\Big(\frac{1-\delta}{3}-\delta_{i+1}(k)\Big),$$
    $$u_{\frac{3+(-1)^n}{2}}(y_k)=u(k)\;\;\;,\;\;\;u_{\frac{3-(-1)^n}{2}}(y_k)=\delta_8(k).$$
    From the previous Claim we know that the vectors $c,v_{i_1,n+1},v_{1,n},v_{2,n},u_1,u_2,w$ are linearly independent vectors from $\ell_\infty^7$. Hence, we may use Lemma \ref{prelim1} with $a=\frac{3}{1-\delta}b$ which provides us with some $K
    >0$ independent of $\ep$ and $n$ such that $|\lambda-1|,\big|\lambda_s\frac{3}{1-\delta}\big|\le K\|a-c\|$ for $s=1,\dots,6$. Thus,
    \begin{equation}\label{eq10}
        |\lambda-1|,|\lambda_s|\le 4K\Big\|b-\frac{1-\delta}{3}c\Big\|\;\;\;\;\text{ for }\;\;\;s=1,\dots,5.
    \end{equation}
    From the hypothesis we deduce that for $k\neq i_0+1$,
    $B(y_k)\le\frac{1-\delta}{3}+\ep$ and 
    $$\frac{2-2\delta}{3}-B(y_k)\stackrel{\eqref{Zeq}}{=}\frac{2-2\delta}{3\|Z_{k,n}\|}Z_{k,n}(y_k)-B(y_k)\le\frac{1-\delta}{3}+\ep.$$
    This implies that $|B(y_k)-\frac{1-\delta}{3}|\le\ep$ for $k\in\{1,\dots,8\}\setminus\{i_0+1\}$ and hence
    \begin{equation}\label{eq7}\Big\|b-\frac{1-\delta}{3}c\Big\|\le\ep.
    \end{equation}
    Taking into account inequalities \eqref{eq10} and \eqref{eq7} we deduce that
    \begin{equation}\label{eq11}
        |\lambda-1|,|\lambda_s|\le 4K\ep\;\;\;\;\text{ for }\;\;\;s=1,\dots,5.
    \end{equation}
    Finally, we consider 
    $$\widetilde b=\restr{B}{\{2(n+1)+i_0-1\}\times\N}\;\;\;\;\text{ and }\;\;\;\;\widetilde d=\widetilde b-\sum\limits_{\substack{i=1,2\\m\in\N\setminus\{n,n+1\}}} \rho_{i,m}\restr{V_{i,m}}{\{2(n+1)+i_0-1\}\times\N}.$$
    Notice that for every $1\le i\le 2$ and $m\in\N\setminus\{n,n+1\}$ it holds that $\restr{V_{i,m}}{\{2(n+1)+i_0-1\}\times\N}=\frac{1}{10}e_{2m+i-1}$. Also, by \eqref{eq8} and \eqref{eq11} we have that
    $$\Big\|\widetilde d-\frac{1-\delta}{3}e_1\Big\|=\|\widetilde d-(-1)^{i_0-1}\restr{V_{i_0,n+1}}{\{2(n+1)+i_0-1\}\times\N}\|\le|\lambda-1|+\sum_{s=1}^5|\lambda_s|\stackrel{\eqref{eq11}}{\le}24K\ep.$$
    Hence, by the triangle inequality,
    $$\begin{aligned}\|\widetilde b\|&=\Bigg\|\widetilde d+\sum\limits_{\substack{i=1,2\\m\in\N\setminus\{n,n+1\}}} \rho_{i,m}\frac{1}{10}e_{2m+i-1}\Bigg\|\\&=\Bigg\|\Big(\widetilde d-\frac{1-\delta}{3}e_1\Big)+\Bigg(\frac{1-\delta}{3}e_1+\frac{1}{10}\cdot\sum\limits_{\substack{i=1,2\\m\in\N\setminus\{n,n+1\}}} \rho_{i,m}e_{2m+i-1}\Bigg)\Bigg\|\\&\ge\Bigg(\frac{1-\delta}{3}+\frac{1}{10}\cdot\sum\limits_{\substack{i=1,2\\m\in\N\setminus\{n,n+1\}}} |\rho_{i,m}|\Bigg)-24K\ep.\end{aligned}$$
    On the other hand $\|\widetilde b\|\le\|B\|\le\frac{1-\delta}{3}+\ep$. Therefore,
    $$\frac{1}{10}\cdot\sum\limits_{\substack{i=1,2\\m\in\N\setminus\{n,n+1\}}} |\rho_{i,m}|\le(24K+1)\ep.$$
    This finishes the proof since
    $$\begin{aligned}&\|B-(-1)^{i_0+1}V_{i_0,n+1}\|\\&\stackrel{\eqref{eq8}}{=}\Bigg\|(\lambda-1)(-1)^{i_0+1}V_{i_0,n+1}+\lambda_1V_{i_1,n+1}+\lambda_{2}V_{1,n}+\lambda_3V_{2,n}+\lambda_4U_1+\lambda_5 U_2+\sum\limits_{\substack{i=1,2\\m\in\N\setminus\{n,n+1\}}} \rho_{i,m}V_{i,m}\Bigg\|\\&\le|\lambda-1|+\sum_{s=1}^5|\lambda_s|+\sum\limits_{\substack{i=1,2\\m\in\N\setminus\{n,n+1\}}} |\rho_{i,m}|\stackrel{\eqref{eq11}}{\le}(264K+10)\ep.\end{aligned}$$
\end{proof}
We are finally ready to prove Theorem \ref{mainTH}, which we state here in a more precise way.
\begin{theorem}\label{precth}
    There are $\ep_0,C>0$ such that if $D$ is a convex subset of $X$ with $0\in D$ and $R:X\to X$ is a $(1+\ep_0/C)$-Lipschitz retraction onto $D$ satisfying for $i,j=1,2$,
    $$\|R((-1)^jV_{i,1})-(-1)^jV_{i,1}\|,\|R((-1)^jU_i)-(-1)^jU_i\|\le \ep_0,$$
    then for every $i,j=1,2$ and $n\in\N$,
    $$\|R((-1)^jV_{i,n})-(-1)^jV_{i,n}\|\le\ep_0.$$
\end{theorem}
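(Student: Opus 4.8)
The plan is to prove Theorem~\ref{precth} by induction on $n$, the base case $n=1$ being exactly the hypothesis. Set $\ell=\ell(n):=\frac{3+(-1)^n}{2}\in\{1,2\}$, the $U$-index occurring in $S_{k,n}$ and in Lemma~\ref{prelim2}. I would let $\ep_0$ be the constant produced by Lemma~\ref{prelim2}, $C$ the one produced by Lemma~\ref{prelim3}, and put $\ep:=\ep_0/(3C)$; then $R$ is $(1+\ep_0/C)$-Lipschitz and $(1+\ep_0/C)\frac{1-\delta}{3}\le\frac{1-\delta}{3}+\ep$. Fix $n\ge1$ and assume $\|R((-1)^jV_{i,n})-(-1)^jV_{i,n}\|\le\ep_0$ for $i,j=1,2$; recall the standing hypothesis also gives $\|R((-1)^jU_i)-(-1)^jU_i\|\le\ep_0$. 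The goal is the same bound with $n$ replaced by $n+1$.

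The device handling the two signs at once: for $\tau\in\{1,-1\}$ introduce the \emph{$\tau$-data}
$$E_i^\tau:=\tau R(\tau U_i),\qquad W_i^\tau:=\tau R(\tau V_{i,n})\qquad(i=1,2),$$
which by the hypotheses satisfy $\|E_i^\tau-U_i\|\le\ep_0$ and $\|W_i^\tau-V_{i,n}\|\le\ep_0$. Hence Lemma~\ref{prelim2} applies to them, so with $Z_{k,n}^\tau:=(1-t(k))E^\tau_\ell+t(k)\frac{W_1^\tau+W_2^\tau}{2}$ one has $\|Z_{k,n}^\tau\|\ge\frac{2-2\delta}{3}$ and
$$\Big\|(-1)^{i_0+1}V_{i_0,n+1}-\frac{2-2\delta}{3\|Z^\tau_{k,n}\|}Z^\tau_{k,n}\Big\|=\frac{1-\delta}{3}\qquad(i_0=1,2,\ k\neq i_0+1).$$
The key geometric remark is that $\tau Z^\tau_{k,n}=(1-t(k))R(\tau U_\ell)+t(k)\frac{R(\tau V_{1,n})+R(\tau V_{2,n})}{2}$ is a convex combination of points of $D$, hence lies in $D$; since its norm is $\ge\frac{2-2\delta}{3}$, the point $Q^\tau_{k,n}:=\frac{2-2\delta}{3\|Z^\tau_{k,n}\|}(\tau Z^\tau_{k,n})$ sits on the segment from $0\in D$ to $\tau Z^\tau_{k,n}\in D$, so it too lies in $D$. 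Consequently $R(Q^\tau_{k,n})=Q^\tau_{k,n}$, $R(0)=0$, and $\tau Q^\tau_{k,n}=\frac{2-2\delta}{3\|Z^\tau_{k,n}\|}Z^\tau_{k,n}$.

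Now fix $i_0\in\{1,2\}$ and $\tau\in\{1,-1\}$: as the pair $(i_0,\tau)$ ranges over these four values, $\tau(-1)^{i_0+1}V_{i_0,n+1}$ runs over $\pm V_{1,n+1}$ and $\pm V_{2,n+1}$, so it suffices to bound $\|R(\tau(-1)^{i_0+1}V_{i_0,n+1})-\tau(-1)^{i_0+1}V_{i_0,n+1}\|$. Write $V:=(-1)^{i_0+1}V_{i_0,n+1}$, so $\|V\|=\frac{1-\delta}{3}$ by Proposition~\ref{c1}, and $B':=\tau R(\tau V)$. Multiplying vectors by $\tau$ (an isometry) gives $\|B'-V\|=\|R(\tau V)-\tau V\|$, $\|\tau V-Q^\tau_{k,n}\|=\|V-\tau Q^\tau_{k,n}\|$, and $\|B'-\frac{2-2\delta}{3\|Z^\tau_{k,n}\|}Z^\tau_{k,n}\|=\|R(\tau V)-R(Q^\tau_{k,n})\|$. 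Using $R(0)=0$, $R(Q^\tau_{k,n})=Q^\tau_{k,n}$, the Lipschitz bound, and the displayed identity of Lemma~\ref{prelim2}, this yields $\|B'\|\le(1+\ep_0/C)\|V\|\le\frac{1-\delta}{3}+\ep$ and, for every $k\neq i_0+1$, $\|B'-\frac{2-2\delta}{3\|Z^\tau_{k,n}\|}Z^\tau_{k,n}\|\le(1+\ep_0/C)\frac{1-\delta}{3}\le\frac{1-\delta}{3}+\ep$. Since the $Z^\tau_{k,n}$ have the form required by Lemma~\ref{prelim2}, Lemma~\ref{prelim3} applies to $B'$ and gives $\|B'-(-1)^{i_0+1}V_{i_0,n+1}\|\le C\ep=\ep_0/3$, that is $\|R(\tau(-1)^{i_0+1}V_{i_0,n+1})-\tau(-1)^{i_0+1}V_{i_0,n+1}\|\le\ep_0/3<\ep_0$. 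This closes the induction.

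I expect the main obstacle to be purely organisational: keeping the two sign conventions straight, which is exactly what the $\tau$-data accomplish (they reduce the negative-sign cases to the positive-sign statements of Lemmas~\ref{prelim2} and~\ref{prelim3} after multiplying through by $\tau$), and verifying rigorously that every argument fed to $R$ — the $\tau Z^\tau_{k,n}$ and the $Q^\tau_{k,n}$ — lies in $D$. That verification leans essentially on the convexity of $D$, on $0\in D$, and on the lower bound $\|Z^\tau_{k,n}\|\ge\frac{2-2\delta}{3}$ of Lemma~\ref{prelim2}; past that, everything reduces to the triangle inequality combined with the two lemmas and the Lipschitz estimate.
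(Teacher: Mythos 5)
Your proof is correct and follows essentially the same strategy as the paper's: an induction in $n$ where the $\tau$-data (the paper's $(-1)^j$-indexed $E_i^j$, $W_i^j$, $B_i^j$) reduce all four sign cases to a single application of Lemmas~\ref{prelim2} and~\ref{prelim3} after passing through convexity of $D$ and the Lipschitz bound. The only cosmetic deviation is your tighter $\ep:=\ep_0/(3C)$ (the paper bounds directly by $\frac{1-\delta}{3}+\ep_0/C$), which changes nothing of substance.
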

\begin{proof}
    % It is enough to show that $\|R(V_{1,n})-V_{1,n}\|\le\ep_0$ for every $n\in\N$ since $\|V_{1,n}-V_{1,m}\|\ge1/10$ for every $n\neq m$. By shifting and blowing up if necessary, we may assume that $R(0)=0\in D$ and for every $i,j\in\{1,2\}$,
    % $$\|R((-1)^jV_{i,1})-(-1)^jV_{i,1}\|,\|R((-1)^jU_i)-(-1)^jU_i\|\le \ep_0.$$
    We will consider $\ep_0,C>0$ given respectively by Lemma \ref{prelim2} and Lemma \ref{prelim3}. Let us proceed by induction, showing that for every $n\in\N$ we have that $\|R((-1)^jV_{i,n})-(-1)^jV_{i,n}\|\le\ep_0$. The case $n=1$ follows directly from the assumptions above. Now, if we assume that $\|R((-1)^jV_{i,n})-(-1)^jV_{i,n}\|\le\ep_0$ for some $n\in\N$, let us prove that
    \begin{equation}\label{hypoeq}\|R((-1)^jV_{i,n+1})-(-1)^jV_{i,n+1}\|\le\ep_0.\end{equation}
    We first denote for $i,j=1,2$ and $k=1,\dots,8$,
    $$W_i^j:=(-1)^jR((-1)^jV_{i,n})\;\;\;,\;\;\; E^j_i:=(-1)^jR((-1)^jU_i)\;\;\;,\;\;\;B^j_i=(-1)^jR((-1)^{i+j+1}V_{i,n+1}),$$
    $$Z_{k,n}^j:=(1-t(k))E^j_{\frac{3+(-1)^n}{2}}+t(k)\frac{W_1^j+W_2^j}{2}.$$
    By property \eqref{p2} from Proposition \ref{c1},
    \begin{equation}\label{eqf1}\|B^j_i\|=\|R((-1)^{i+j+1}V_{i,n+1})-R(0)\|\le(1+\ep_0/C)\frac{1-\delta}{3}.\end{equation}
    It is immediate that $(-1)^jE_i^j,(-1)^jW_i^j\in D$ and hence $\frac{2-2\delta}{3\|Z_{k,n}^j\|}(-1)^jZ_{k,n}^j\in D$ for $j\in\{1,2\}$. Notice also that by the induction hypothesis we know that $E_1^j,E_2^j,W_1^j,W_2^j$ satisfy the requirements of Lemma \ref{prelim2}. Therefore,
    by Lemma \ref{prelim2}, if $k\neq i+1$,
    \begin{equation}\label{eqf2}\begin{aligned}\Big\|B^j_i-\frac{2-2\delta}{3\|Z^j_{k,n}\|}Z^j_{k,n}\Big\|=&\Big\|R((-1)^{i+j+1}V_{i,n+1})-R\Big(\frac{2-2\delta}{3\|Z_{k,n}^j\|}(-1)^jZ_{k,n}^j\Big)\Big\|\\\le&(1+\ep_0/C)\Big\|(-1)^{i+1}V_{i,n+1}-\frac{2-2\delta}{3\|Z^j_{k,n}\|}Z^j_{k,n}\Big\|\\=&(1+\ep_0/C)\frac{1-\delta}{3}.\end{aligned}\end{equation}
%    Also, we have that for $k\neq i+1$,
    %\begin{equation}\label{eqf3}\begin{aligned}\|B^j_i-\frac{2-2\delta}{3\|Z^j_{k,n}\|}Z^j_{k,n}\|\ge\frac{2-2\delta}{3}-\|B^j_i\|\stackrel{\eqref{eqf1}}{\ge}(1-\ep_0/C)\frac{1-\delta}{3}.\end{aligned}\end{equation}
    Putting together inequalities \eqref{eqf1} and \eqref{eqf2} we have that whenever $k\neq i+1$,
    $$\|B^j_i\|,\Big\|B^j_i-\frac{2-2\delta}{3\|Z^j_{k,n}\|}Z^j_{k,n}\Big\|\le\frac{1-\delta}{3}+\ep_0/C.$$
    Clearly, from the induction hypothesis we have $\|W_i^j-V_{i,n}\|\le\ep_0$ and $\|E_i^j-U_i\|\le\ep_0$ for $i,j\in\{1,2\}$. Therefore, we are allowed to use Lemma \ref{prelim3} in this situation. Then, for every $1\le i,j\le2$,
    $$\|B^j_i-(-1)^{i+1}V_{i,n+1}\|\le\ep_0.$$
    This proves \eqref{hypoeq} and hence finishes the induction  since for every $1\le i,j\le2$,    $$\begin{aligned}\|R((-1)^{i+j+1}V_{i,n+1})-(-1)^{i+j+1}V_{i,n+1}\|=&\|(-1)^jR((-1)^{i+j+1}V_{i,n+1})-(-1)^{i+1}V_{i,n+1}\|\\=&\|B^j_i-(-1)^{i+1}V_{i,n+1}\|\le\ep_0,\end{aligned}$$
    
\end{proof}

\begin{proof}[Proof of Theorem \ref{mainTH}]
    We take $\lambda=1+\ep_0/C>1$. Assume that there is a $\lambda$-Lipschitz retraction $R$ from $X$ onto a generating compact and convex subset $K$ of $X$. Then, for every $k\in\N$ we may shift $R$ by an element $x_0\in X$ and dilate it with ratio $k$ to obtain
    $$R_k(x)=kR\Big(\frac{x}{k}+x_0\Big)-kx_0.$$
    Notice that $R_k$ is a $\lambda$-Lipschitz retraction onto $k(K-x_0)$. Picking $x_0\in K$ appropriately we get from the fact that $K$ is generating that $R_k(x)\xrightarrow[k\to\infty]{\|\cdot\|}x$.
    Therefore, for large enough $k\in\N$, $R_k$ meets the hypothesis of Theorem \ref{precth} and hence
    $$\|R_k(V_{1,n})-V_{1,n}\|\le\ep_0\;\;\;\;\forall n\in\N.$$
    Clearly, $\|V_{1,m}-V_{1,n}\|\ge1/5$ for every $n,m\in\N$ distinct. Hence, for every $m,n\in\N$ distinct, $\|R_k(V_{1,m})-R_k(V_{1,n})\|\ge 1/5-2\ep_0>0.$
    Therefore we find the contradiction since the compact set $k(K-x_0)$ contains the sequence $\big(R_k(V_{1,n})\big)_{n\in\N}$ which has no Cauchy subsequence.
\end{proof}

It is worth mentioning that $X$ does enjoy both linear and nonlinear approximation properties. In fact, we have the following straightforward result.

\begin{proposition}
    The space 
    $$X=\overline{\text{span}}\Big(\{V_{i,n}\}_{\substack{i=1,2\\n\in\N}}\cup\{U_1,U_2\}\Big)$$
    is isomorphic to $\ell_1$.
\end{proposition}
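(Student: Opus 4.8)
The plan is to exhibit an explicit isomorphism by showing that the spanning vectors $\{V_{i,n}\}_{i=1,2,n\in\N}\cup\{U_1,U_2\}$, suitably normalised, form a sequence equivalent to the unit vector basis of $\ell_1$, so that $X$ is isomorphic to $\ell_1$. Concretely, I would work with the countable family $\mathcal V=\{U_1,U_2\}\cup\{V_{i,n}\}_{i=1,2,n\in\N}$ and prove the two-sided estimate: there are constants $0<c\le C$ with
\[
c\sum_{w\in\mathcal V}|a_w|\;\le\;\Bigl\|\sum_{w\in\mathcal V}a_w\,w\Bigr\|\;\le\;C\sum_{w\in\mathcal V}|a_w|
\]
for all finitely supported scalar families $(a_w)$. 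The upper bound is immediate since all vectors in $\mathcal V$ have norm bounded by $1$ (Proposition \ref{c1}\eqref{p2}), so one may take $C=1$. The content is the lower bound, which is where the $\ell_1$-like structure built into the construction must be used.

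For the lower bound I would separate the ``first coordinate'' part from the ``tail'' part. For each $w\in\mathcal V$ the restriction $\restr{w}{\{m\}\times\N}$ with $m\ge 2$ is, by construction, either a multiple of a single basis vector $e_j$ of the fixed $\ell_1(\N)$-equivalent sequence $(e_n)\subset S_{\ell_\infty(\N)}$ (namely $\frac{1}{10}e_{2m+i-1}$ or $\frac{1-\delta}{3}e_1$ for the $V$'s, and $\frac1{10}e_1$ for the $U$'s) or zero. The key observation is that the ``slots'' $m=2n+i-1$ (for $V_{i,n}$) are almost all distinct across $\mathcal V$, so after subtracting off the common $e_1$-direction one gets, on the union of the rows $\{m\}\times\N$ with $m\ge 2$, a genuine block-diagonal sum of scaled copies of the $\ell_1(\N)$-equivalent sequence $(e_n)$. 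Since $(e_n)$ satisfies $\|\sum_n\lambda_n e_n\|=\sum_n|\lambda_n|$ (the strong $\ell_1$ property recalled in the Definitions section), this part contributes a full $\ell_1$-sum, up to the constant $\tfrac{1}{10}$ coming from the scaling, and up to a bounded correction for the finitely many overlapping indices; concretely, for a vector $x\in\ell_\infty(\N^2)$ one has $\|x\|\ge\sup_{m\ge2}\|\restr{x}{\{m\}\times\N}\|$, and summing the disjoint row contributions that come from distinct slots yields a lower bound proportional to $\sum|a_w|$ over the $V_{i,n}$, while the two vectors $U_1,U_2$ are controlled separately using their large first-row values $u(8)=1$.

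Thus the steps, in order, are: (1) reduce to finitely supported combinations; (2) record the trivial upper bound $C=1$; (3) analyse $\restr{w}{\{m\}\times\N}$ for $m\ge2$ and identify, for each $w\in\mathcal V$, its ``private'' row-slot where no other element of $\mathcal V$ (except at most one, via the $e_1$-direction) is supported; (4) use the strong $\ell_1$-property of $(e_n)$ to sum the private-slot contributions and obtain $\bigl\|\sum a_w w\bigr\|\ge c\sum_{i,n}|a_{V_{i,n}}|$; (5) treat the $U_1,U_2$ coefficients by looking at the BLOCK structure in the first row $\{1\}\times\N$, where $U_1$ and $U_2$ have their mass on disjoint blocks (the $8(2m+j-1)+k$ vs $8(2m-j)+k$ indexing for $j=1$ vs $j=2$), giving a lower bound on $|a_{U_1}|+|a_{U_2}|$ as well; (6) combine (4) and (5), absorbing the bounded number of cross-terms, to get a single constant $c>0$. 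The main obstacle is bookkeeping in step (3)–(5): one must verify that the slot indices $2n+i-1$ are pairwise distinct except for the prescribed finitely many exclusions in the definition of $V_{i,n}$ (the clause $m\ne 2n+i-1,2(n+1),2(n+1)+1$), and carefully account for the shared $\frac{1-\delta}{3}e_1$ contributions so that no infinite cancellation can occur; once the disjointness of the private slots is pinned down, the $\ell_1$-estimate is essentially forced by the isometric $\ell_1$-property of $(e_n)$. Finally, having both bounds, the map $w\mapsto$ (corresponding unit vector of $\ell_1$) extends to a linear isomorphism from $X$ onto $\ell_1$, completing the proof.
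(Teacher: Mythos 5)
Your overall plan is in the right spirit (the $\ell_1$-structure indeed comes from the $\ell_1$-equivalent sequence $(e_n)$ appearing in the rows $\{m\}\times\N$ with $m\ge 2$), but the specific mechanism you propose is off in a way that leaves a real gap, and it also misses the much simpler route the paper takes.

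The gap is in steps (3)--(4). You describe each $V_{i,n}$ as having a ``private row-slot'' $m=2n+i-1$ and then say you will ``sum the disjoint row contributions'' coming from these slots. But the norm on $\ell_\infty(\N^2)$ is a supremum over coordinates; from $\|x\|\ge\sup_{m\ge 2}\|\restr{x}{\{m\}\times\N}\|$ you only get a bound by a \emph{single} row, never a sum over rows, so ``summing the row contributions'' does not give a lower bound of the form $c\sum|a_w|$. Moreover, the picture of $V_{i,n}$ living in one private row is not what the construction does: $V_{i,n}$ is supported in \emph{almost every} row $m\ge 2$ (as $\frac1{10}e_{2n+i-1}$), while on the row $m=2n+i-1$ it looks like $\frac{1-\delta}{3}e_1$, which is exactly where it collides with $U_1,U_2$ (and possibly $V_{1,1}$). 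So there is no ``block-diagonal'' decomposition across the rows $m\ge 2$. The correct mechanism, which you come close to but do not isolate, is that within one fixed row $m$, the restrictions of the $V_{i,n}$ are scalar multiples of \emph{pairwise distinct} vectors $e_{2n+i-1}$, and the $\ell_1$-equivalence of $(e_n)$ then turns the $\ell_\infty(\N)$-norm of that single restriction into the $\ell_1$-sum of the coefficients. That is the whole argument.

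The paper's proof implements this cleanly and avoids all your bookkeeping worries in steps (4)--(6): it first peels off the finite-dimensional piece $\operatorname{span}\{V_{1,1},V_{2,1},U_1,U_2\}$, so it suffices to show $Y=\overline{\operatorname{span}}\{V_{i,n}:i=1,2,\ n\ge2\}$ is isomorphic to $\ell_1$, and for $Y$ it simply restricts to the single row $m=2$, where $\restr{V_{i,n}}{\{2\}\times\N}=\frac1{10}e_{2n+i-1}$ for every $n\ge2$ and the indices $2n+i-1\ge 4$ are pairwise distinct. This gives $\bigl\|\sum\lambda_{i,n}V_{i,n}\bigr\|\ge\frac1{10}\sum|\lambda_{i,n}|$ directly, the upper bound is trivial, and the finite-dimensional summand changes nothing up to isomorphism. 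If you want to salvage your approach without the finite-dimensional split, you would need to fix one row $m$ larger than all the $2(n+1)+1$ for $n$ in the (finite) support, extract $\frac1{10}\bigl(|a_{U_1}+a_{U_2}|+\sum|a_{V_{i,n}}|\bigr)$ from that single row, and then separately bound $|a_{U_1}-a_{U_2}|$ from the block structure in row $1$ --- which is doable but strictly more work than the paper's two-line argument.
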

\begin{proof}
    We know that $X=\text{span}(\{V_{1,1},V_{2,1},U_1,U_2\})\oplus\overline{\text{span}}\Big(\{V_{i,n}\}_{\substack{i=1,2\\n\ge2}}\Big)$. Therefore, it is enough to show that $Y=\overline{\text{span}}\Big(\{V_{i,n}\}_{\substack{i=1,2\\n\ge2}}\Big)$ is isomorphic to $\ell_1$. Clearly $\restr{V_{i,n}}{\{2\}\times\N}=\frac{1}{10}e_{2n+i-1}$ for every $1\le i\le2,$ $n\ge2$ and so for every sequence $(\lambda_{i,n})_{\substack{i=1,2\\n\ge2}}\subset\R$ with finitely many nonzero elements it holds that
    $$\begin{aligned}\bigg\|\sum\limits_{\substack{i=1,2\\n\ge2}}\lambda_{i,n}V_{i,n}\bigg\|\ge&\bigg\|\sum\limits_{\substack{i=1,2\\n\ge2}}\lambda_{i,n}\restr{V_{i,n}}{\{2\}\times\N}\bigg\|=\bigg\|\sum\limits_{\substack{i=1,2\\n\ge2}}\lambda_{i,n}\frac{1}{10}e_{2n+i-1}\bigg\|=\frac{1}{10}\sum\limits_{\substack{i=1,2\\n\ge2}}|\lambda_{i,n}|.\end{aligned}$$
    % Assume that $(e_{2n})$ is pointwise convergent in $\ell_\infty(\N)$, then the sequence $(V_{1,2n})_{n\in\N}$ is pointwise converging to $V\in\ell_\infty(\N^2)$ given by
    % $$V(x)=\begin{cases}
    %     \frac{1-2\delta}{3}-\delta_6(k)\;\;&\text{ if }x=(1,2m8+k)\text{ for }k=1,\dots,8,\;m\in\N,\\
    %     \frac{1}{10}e&\text{ if }x=(m,k)\text{ for }k\in\N,\;m\ge2,\\
    %     0&\text{ elsewhere}.
    % \end{cases}$$
    % Clearly, $V\notin X$.
\end{proof}

\begin{corollary}\label{corl1}
There is a Lipschitz retraction from $X=\overline{\text{span}}\Big(\{V_{i,n}\}_{\substack{i=1,2\\n\in\N}}\cup\{U_1,U_2\}\Big)$
onto a generating compact and convex subset of $X$.
\end{corollary}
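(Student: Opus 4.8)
The plan is to deduce the corollary from the preceding proposition together with the known positive answer to the Godefroy--Ozawa question in the presence of a Schauder basis, transporting a retraction from $\ell_1$ along the isomorphism. Concretely, by \cite[Theorem 3.3]{HM21} (applied to $\ell_1$, which even carries a monotone Schauder basis) there is a $\lambda_0$-Lipschitz retraction $\rho\colon\ell_1\to K_0$ onto some compact convex generating subset $K_0$ of $\ell_1$. By the previous proposition there is a surjective isomorphism $T\colon\ell_1\to X$. I would then set $K:=T(K_0)$ and $R:=T\circ\rho\circ T^{-1}\colon X\to X$ and check the three required properties.

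First, $R$ is Lipschitz, being a composition of Lipschitz maps, with constant at most $\|T\|\,\|T^{-1}\|\,\lambda_0$. Second, $R$ is a retraction onto $K$: it clearly maps $X$ into $K$, and if $x\in K$ then $T^{-1}x\in K_0$, so $\rho(T^{-1}x)=T^{-1}x$ and hence $R(x)=T(T^{-1}x)=x$. Third, $K$ is a generating compact convex set: compactness holds since $K_0$ is compact and $T$ is continuous, convexity since $T$ is affine and $K_0$ is convex, and generation since $T$ is a surjective linear homeomorphism, so $\overline{\spann}(K)=\overline{\spann}(T(K_0))=T\big(\overline{\spann}(K_0)\big)=T(\ell_1)=X$. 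This yields the corollary.

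There is essentially no obstacle here beyond recording these routine facts; the paper itself flags the result as straightforward. The only point worth noting is that the isomorphism $T$ need not be an isometry, so the sharp $(1+\e)$-Lipschitz form of \cite{HM21} available for monotone bases does not survive the transport --- which is precisely the phenomenon that Theorem \ref{mainTH} is designed to detect. Equivalently, one could observe that $X$, being isomorphic to $\ell_1$ and hence carrying a Schauder basis, in particular has a finite-dimensional decomposition, so that \cite[Theorem 3.3]{HM21} applies directly to $X$.
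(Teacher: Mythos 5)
Your proof is correct and in substance matches the paper's: the paper simply notes that $X$, being isomorphic to $\ell_1$, has a Schauder basis and then invokes \cite[Theorem 3.3]{HM21} directly, which is exactly the shorter variant you record in your final sentence. Your primary route (transporting a retraction from $\ell_1$ along the isomorphism $T$) is an equivalent and equally valid packaging of the same idea, just spelled out more explicitly.
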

\begin{proof}
    Since $\ell_1$ has a Schauder basis, necessarily $X$ has a basis, too. Then, the statement follows from Theorem 3.3 in \cite{HM21}.\end{proof}

\textbf{Data availability statement.} My manuscript has no associated data.

\textbf{Conflict of interest statement.} There is no conflict of interest.

\textbf{Founding information.} This work has been supported by PID2021-122126NB-C31 AEI (Spain) project, by FPU19/04085 MIU (Spain) Grant, by Junta de Andalucia Grants FQM-0185 and PY20\_00255, by GA23-04776S project (Czech Republic) and by SGS22/053/OHK3/1T/13 project (Czech Republic).

\bigskip

% \bibliography{bibtfg}

\begin{thebibliography}{99}

 \bibitem {BL2000} Yoav Benyamini and Joram Lindenstrauss, \textit{Geometric nonlinear functional analysis}, Vol.1 of American Mathematical Society colloquium publications 48-. American Mathematical Society, 2000

 \bibitem {Enf73b} Per Enflo. \textit{A Banach space with basis constant >1} 1. Ark. Mat., 11:103–107, 1973.

 \bibitem {Enf73} Per Enflo. \textit{A counterexample to the approximation problem in Banach spaces.} Acta
Math., 130:309–317, 1973.

\bibitem {Fab1} Marián Fabian, Petr Habala, Petr Hájek, Vicente Montesinos, and Václav Zizler. \textit{Banach space theory.} CMS Books in Mathematics/Ouvrages de Mathématiques de la SMC.

\bibitem {GK03} Gilles Godefroy and Nigel J. Kalton. \textit{Lipschitz-free Banach spaces}. Volume 159, pages 121–141. 2003. Dedicated to Professor Aleksander Pe\l czy\'{n}ski on the occasion of his 70th birthday.

\bibitem {GP19} Luis C. García-Lirola and Antonín Procházka. \textit{Pe\l czy\'{n}ski space is isomorphic to the Lipschitz free space over a compact set.} Proc. Amer. Math. Soc., 147(7):3057–3060, 2019.

\bibitem {GMZ16} Antonio J. Guirao, Vicente Montesinos, and Václav Zizler. \textit{Open problems in the geometry and analysis of Banach spaces.} Springer, [Cham], 2016.

\bibitem {GO14} Gilles Godefroy and Narutaka Ozawa. \textit{Free Banach spaces and the approximation properties.} Proc. Amer. Math. Soc., 142(5):1681–1687, 2014.

\bibitem {God215} Gilles Godefroy. \textit{Extensions of Lipschitz functions and Grothendieck’s bounded approximation property.} North-West. Eur. J. Math., 1:1–6, 2015.

\bibitem {God15} Gilles Godefroy. \textit{A survey on Lipschitz-free Banach spaces.} Comment. Math., 55(2):89–118, 2015.

\bibitem {God20} Gilles Godefroy. \textit{Lipschitz approximable Banach spaces.} Comment. Math. Univ. Carolin., 61(2):187–193, 2020.

\bibitem {HM21} Petr Hájek and Rubén Medina. \textit{Compact retractions and Schauder decompositions in Banach spaces}. Trans. Amer. Math. Soc., 376(2):1343–1372, 2023.

\bibitem {HM22} Petr Hájek and Rubén Medina. \textit{Retractions and the bounded approximation property in Banach spaces.} Mediterr. J. Math., 20(2):Paper No. 75, 13, 2023.

\bibitem {Kal12} Nigel J. Kalton. \textit{The uniform structure of Banach spaces.} Math. Ann., 354(4):1247–1288, 2012.


\bibitem {Med23} Rubén Medina. \textit{Compact Hölder retractions and nearest point maps}. Adv. Math., 428:Paper No. 109140, 13, 2023.






\end{thebibliography}
% \bibliographystyle{alpha}
\end{document}